\newtheorem{thm}{Theorem}[section]
\newtheorem{prop}[thm]{Proposition}
\newtheorem{lem}[thm]{Lemma}
\newtheorem{cor}[thm]{Corollary}
\theoremstyle{definition}
\newtheorem{defn}[thm]{Definition}
\newtheorem{quest}[thm]{Question}
\DeclareMathOperator{\ad}{ad}
\newcommand{\field}[1]{\mathbb{#1}}
\newcommand{\R}{\field{R}}
\newcommand{\N}{\field{N}}
\newcommand{\K}{\field{K}}
\newcommand{\Quat}{\field{QU}}
\newcommand{\Oct}{\field{O}}
\newcommand{\C}{\field{C}}
\newcommand{\Hyp}{\field{H}}
\newcommand{\mcv}{\mathcal{V}}
\newcommand{\frn}{\mathfrak{n}}
\newcommand{\mfn}{\mathfrak{n}}
\newcommand{\mfg}{\mathfrak{g}}
\DeclareMathOperator{\Isom}{Isom}
\DeclareMathOperator{\Span}{Span}
\DeclareMathOperator{\Aut}{Aut}
\begin{document}

\title{Pinched Curvature in Heintze Groups of Carnot-type}

\author{Brendan Burns Healy}
\address{Department of Mathematical Sciences\\
        University of Wisconsin-Milwaukee\\
       Milwaukee, WI 53211\\
 USA}
\email{healyb@uwm.edu}

\begin{abstract}
Rank-one symmetric spaces carry a solvable group model which have a generalization to a larger class of Lie groups that are one-dimensional extensions of nilpotent groups. By examining some metric properties of these symmetric spaces, we motivate and prove the existence of analogous left-invariant, Riemannian metrics on Heintze groups of Carnot-type. These metrics adhere to certain natural curvature pinching properties, and we show in a special case that this pinching is optimal, appealing in part to a result of Belegradek and Kapovitch.
\end{abstract}




\maketitle

\section{Introduction}

Rank-one symmetric spaces of non-compact type are some of the most interesting and well-studied objects in mathematics. We begin by making an observation, well-known to experts, about the relationship between a Lie algebra structure these spaces admit and their sectional curvature as Riemannian manifolds, in order to motivate a generalization. Once a solvable group model is chosen for a negatively curved symmetric space, vertical tangent planes, which are defined as those that contain a vector orthogonal to the Heisenberg subalgebra, define subspaces isometric to real hyperbolic planes with constant curvature $-1$ inside $\K \Hyp^n$ when the other vector is in the direction of the first element of the lower central series of $\mathcal{H}^{n-1}$, and rescaled versions of real hyperbolic planes with curvature $-4$ when this vector is in the direction of the second element.

In \cite{Heintze74_Homogeneous} Heintze answers the question as to whether these symmetric spaces are the only examples of connected manifolds that are homogeneous and have negative curvature. He demonstrates that while they are not, all spaces of this form share some key features. In particular, all such Riemannian manifolds are isometric to Lie groups which are one-dimensional extensions of nilpotent groups (with other algebraic conditions) equipped with a particular kind of left-invariant metric. A Heintze group is a Lie group admitting a left-invariant Riemannian metric of negative curvature. The geometry of Heintze groups, has been an expanding area of study - see for example \cite{hei1}, \cite{hei2}, \cite{yves}, \cite{hei3}.

Heintze groups are said to be \emph{of Carnot-type} if their derived subgroup admits a stratification (see Section~\ref{sec:background}). We prove the existence of (left-invariant, Riemannian) metrics on Heintze groups of Carnot-type that generalize the metric-algebraic relationship of the solvable group model for rank one symmetric spaces. In the following statement $A$ refers to a vector orthogonal to the derived subalgebra, while a layered basis is an orthogonal basis which restricts to a basis on each element of the stratification.

\vspace{0.1in}

\noindent {\bf Theorem \ref{thm:main} } {\it
For any Heintze group of Carnot-type $G$ with stratification $\oplus_i \mcv_i$, there exists a left-invariant Riemannian metric such that at any point $p \in G$, if $X,Y$ are layered basis tangent vectors in $T_p G$, then the sectional curvature $K(X,Y)$ satisfies the following conditions:
\begin{itemize}
    \item  $K(A,Y) = -i^2$ for planes such that $Y \in \mcv_i$, and 
    
    \item $ -s^2 \leq K(X,Y) \leq -1 $ where $s$ is the step nilpotency of the derived subgroup.
\end{itemize} }

\vspace{0.1in}

One should consider this class of metric the higher-step analog of the metrics within the intersection $QP \cap AM$ considered in Section 6 of Eberlein and Heber \cite{QP}. 

These metrics for Heintze groups of Carnot-type speak to their role as a generalization of the solvable groups associated to rank-one symmetric spaces. In particular, they adhere, under a mild condition, to the same type of curvature pinching that negatively curved symmetric spaces do. In the following statement, we say that a Lie group has optimal pinching $\frac{1}{L}$ if it admits a left-invariant metric which realizes it as a Riemannian manifold with $\sec \in [-L,-1]$, but no left-invariant metrics with tighter curvature bounds.

\vspace{.1in}

\noindent {\bf Theorem \ref{thm:pinchedcurv} } {\it
Let $G$ be a Heintze group of Carnot-type whose derived subgroup admits a lattice. Then $G$ has optimal pinching $\frac{1}{s^2}$, where $s$ is the step nilpotency of $[G,G]$.}
\vspace{.1in}

To prove this statement, we construct a metric as in Theorem~\ref{thm:main} that adheres to certain additional parameters designed to ensure that the curvature of all other planes not mentioned in the definition of the metric lie in the right interval. In fact, this direction does not rely on the lattice assumption. The proof of the optimality of this pinching follows almost directly from the main theorem of Belegradek and Kapovitch in \cite{BeleKap}, in which they describe the pinching bounds on Riemannian manifolds in terms of the highest step nilpotent subgroup of the fundamental group.

Some bounds on the pinched curvature values for Heintze groups were already known. Work of Pansu in Section~5 of \cite{pansuConforme} can be adapted to demonstrate such bounds\footnote{That these bounds can be obtained from the work in \cite{pansuConforme} was communicated to the author by Gabriel Pallier.}. However, values obtained as a consequence of that work are less sharp than the curvature condition $-s^2 \leq K \leq -1$ we prove in Theorem~\ref{thm:pinchedcurv}. 

The proof of the Theorem~\ref{thm:main} proceeds by considering the `upper half-space model' of a Heintze group by way of the generalized Cayley transform (see \cite{harmonic}). This transformation is a diffeomorphism to the Heintze group expressed as a semidirect product from a manifold which is the derived subgroup cross the positive reals. This upper half-space thus inherits a class of pullback Riemannian metrics as well as a Lie structure. We then construct a metric on this half-space model with the desired properties, using the induced Lie bracket information from the diffeomorphism. Verification of the condition on the vertical tangent planes is done by way of the structure constants for a Lie group, which directly give an expression for sectional curvature; the horizontal (non-vertical) planes are examined using machinery developed in \cite{Heintze74_Homogeneous} and \cite{QP}.

The author would like to thank Mark Pengitore, Anton Lukyanenko, and Yves Cornulier for helpful conversations, and especially Gabriel Pallier for pointing out an error and helping with background, as well as the anonymous referee for constructive comments.

\section{Background}
\label{sec:background}

We begin with some definitions to understand the context that these groups arise from. A reader familiar with Heintze groups of Carnot-type may safely skip to the next section. As a convention, we will use capital letters to denote tangent vectors, or elements of the Lie algebra when appropriate. As usual, fraktur letters will denote the Lie (sub)algebras, and $g$ will refer to a Riemannian metric.

Recall that a metric space $X$ is \emph{homogeneous} if $\Isom(X)$ acts transitively on $X$. In particular, a Lie group equipped with a left-invariant metric is always homogeneous. A foundational result in the study of homogeneous manifolds is the following theorem of Kobayashi.

\begin{thm}{\cite{kobayashi}}
\label{thm:koba}
A homogeneous Riemannian manifold with nonpositive sectional curvature and negative definite Ricci tensor is simply connected.
\end{thm}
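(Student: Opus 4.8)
The statement is Kobayashi's theorem, quoted here from \cite{kobayashi}, so the plan is to recall the standard argument; the essential point is that \emph{metric} homogeneity forces the displacement functions of the deck group on the universal cover to be constant, and then $\sec\le 0$ together with negative-definite Ricci leaves no room for a nontrivial deck transformation. Write $\pi\colon \widetilde M\to M$ for the universal Riemannian cover, with deck group $\Gamma=\pi_1(M)$ acting freely, properly discontinuously and isometrically with $\widetilde M/\Gamma=M$. A homogeneous Riemannian manifold is complete, hence so is $\widetilde M$; by the Cartan--Hadamard theorem $\widetilde M$ is a Hadamard manifold (diffeomorphic to $\R^n$, and CAT(0) since $\sec\le 0$). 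It suffices to prove $\Gamma=\{1\}$, so suppose toward a contradiction that $\gamma\in\Gamma\setminus\{1\}$.

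\emph{Homogeneity makes $d(\,\cdot\,,\gamma\,\cdot\,)$ constant.} Let $N=N_{\Isom(\widetilde M)}(\Gamma)$. By Myers--Steenrod $\Isom(\widetilde M)$ is a Lie group acting smoothly and properly, so its closed subgroup $N$ is again a Lie group, with $\Gamma$ a discrete normal subgroup. Every isometry of $M$ lifts to an element of $N$, and the kernel of the induced map $N\to\Isom(M)$ is exactly $\Gamma$, so $N/\Gamma\cong\Isom(M)$; since $\Isom(M)$ is transitive on $M$, $N$ is transitive on $\widetilde M$. The orbits of the identity component $N^\circ$ then have full dimension, hence are open, and $\widetilde M$ is connected, so $N^\circ$ is already transitive. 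Moreover $g\mapsto g\gamma g^{-1}$ is a continuous map from the connected group $N^\circ$ to the discrete set $\Gamma$, hence constant, equal to $\gamma$; thus $N^\circ$ centralizes $\Gamma$. For $g\in N^\circ$ we then get $d(gx,\gamma gx)=d(gx,g\gamma x)=d(x,\gamma x)$, so the displacement function of $\gamma$ is $N^\circ$-invariant, hence a constant $\ell$, and $\ell>0$ because $\Gamma$ acts freely and $\gamma\ne 1$. So $\gamma$ is a Clifford translation of $\widetilde M$.

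\emph{A Clifford translation produces a flat direction, contradicting $\mathrm{Ric}<0$.} Since $\widetilde M$ is Hadamard and $\gamma$ realizes its translation length at every point, $\gamma$ is semisimple with all of $\widetilde M$ as its minimal set; the structure theorem for minimal sets of such isometries (the flat-strip theorem for CAT(0) spaces) gives an isometric product splitting $\widetilde M\cong Y\times\R$ in which the $\R$-factors are the axes of $\gamma$ and $\gamma$ acts as $(\gamma_Y)\times(t\mapsto t+\ell)$; as every axis has the form $\{y\}\times\R$ and is $\gamma$-invariant, $\gamma_Y=\mathrm{id}_Y$. Because the splitting is a Riemannian product, the curvature tensor annihilates every $2$-plane containing the $\R$-direction $\partial_t$, so $\mathrm{Ric}_{\widetilde M}(\partial_t,\partial_t)=0$. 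But $\pi$ is a local isometry, so $\mathrm{Ric}_{\widetilde M}$ is the pullback of $\mathrm{Ric}_M$, which is negative definite --- a contradiction. Hence $\Gamma=\{1\}$ and $M$ is simply connected.

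The main obstacle is the middle step: extracting, from metric homogeneity of $M$ alone, a transitive \emph{connected} isometry group of $\widetilde M$ that commutes with the deck transformations, since that is precisely what forces the displacement functions to be constant. Once this is available, the remaining ingredients --- Cartan--Hadamard, Myers--Steenrod, and the flat-strip theorem --- are entirely standard.
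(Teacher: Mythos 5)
The paper does not prove this statement; it is quoted verbatim from \cite{kobayashi} as background, so there is no internal argument to compare yours against. Judged on its own, your proof is correct and is essentially the classical Kobayashi--Wolf argument: homogeneity of $M$ lifts to a transitive action on $\widetilde M$ by the normalizer $N$ of the deck group, the identity component $N^\circ$ is still transitive and must centralize the discrete group $\Gamma$ (continuity of $g\mapsto g\gamma g^{-1}$ into a discrete set), so every deck transformation is a Clifford translation; nonpositive curvature then converts a nontrivial Clifford translation into a flat direction, killing a Ricci eigenvalue. All the individual steps check out, including the points a careless write-up usually fumbles: $N$ is closed in $\Isom(\widetilde M)$ because $\Gamma$ is discrete hence closed, lifts of isometries of $M$ can be chosen to hit any point of a fiber so $N$ really is transitive, and the displacement $\ell$ is strictly positive because $\Gamma$ acts freely. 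One small stylistic remark: you route the last step through the CAT(0) flat-strip/minimal-set decomposition and then assert the metric splitting $\widetilde M\cong Y\times\R$ is Riemannian. That is true, but the more economical classical path is to observe that a Clifford translation of a Hadamard manifold determines a nonvanishing parallel vector field $X_p=\exp_p^{-1}(\gamma p)$; a parallel field satisfies $R(\cdot,\cdot)X=0$, hence $K(X,Y)=0$ for all $Y$ and $\mathrm{Ric}(X,X)=0$ directly, with no appeal to de Rham or the splitting theorem. Either way the contradiction with negative-definite Ricci is immediate, so your proof stands.
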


From here, significant attention was paid to the case of strictly negative sectional curvature.

\begin{defn}
A connected Riemannian manifold of strictly negative sectional curvature which is homogeneous is called a \emph{Heintze space}. 
\end{defn}

The name Heintze space derives from the following theorem of Heintze.

\begin{thm}{\cite{Heintze74_Homogeneous}}
\label{thm:Heintze}
Every Heintze space is isometric to a connected, solvable Lie group with a left-invariant, Riemannian metric. Furthermore, a Heintze space may be represented as a solvable Lie algebra with an inner product.
\end{thm}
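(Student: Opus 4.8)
The plan is to reproduce the argument of Heintze. Let $X$ be a Heintze space, i.e.\ a homogeneous Riemannian manifold with $\sec < 0$.

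First I would promote $X$ to a pinched Hadamard manifold. Since $\Isom(X)$ acts transitively and the Grassmannian of tangent $2$--planes over a point is compact, the sectional curvature is confined to some interval $[-b^2,-a^2]$ with $0 < a \le b$; in particular it is nonpositive with negative definite Ricci curvature, so Kobayashi's theorem (Theorem~\ref{thm:koba}) gives that $X$ is simply connected, while homogeneity gives completeness. Hence $X$ is Hadamard, diffeomorphic to $\R^n$, with visual boundary $\cone X$ a topological $(n-1)$--sphere and $C^2$ Busemann functions.

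Next I would pass to the stabilizer of a boundary point. Fix a connected transitive subgroup $G \le \Isom(X)$ (for instance $\Isom_0(X)$) and a point $\xi \in \cone X$. The first lemma is that $G_\xi$ still acts transitively on $X$; one proves $\mathfrak{g} = \mathfrak{g}_o + \mathfrak{g}_\xi$ (hence $G_\xi \cdot o = X$) by combining the compactness of the point stabilizers with the structure of the $G$--action on $\cone X$ under pinched negative curvature. Since $G_\xi$ fixes $\xi$, it preserves the Busemann function $b_\xi$ up to an additive constant, giving a homomorphism $\lambda \colon G_\xi \to \R$, $\lambda(g) = b_\xi(g o) - b_\xi(o)$, which is surjective because $G_\xi$ is transitive and $b_\xi$ is onto $\R$. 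Let $N = \ker \lambda$, the subgroup preserving every horosphere centered at $\xi$; writing $\mathfrak{g}_\xi = \mathfrak{n} \oplus \R A$ with the flow of a generator $A$ crossing horospheres transversally, one sees $N$ acts transitively on each horosphere.

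The decisive step — and the one I expect to be the main obstacle — is to show that $N$ (or a subgroup of it still transitive on horospheres) is \emph{nilpotent}, equivalently that $\ad A|_{\mathfrak{n}}$ has only eigenvalues of positive real part: negative curvature forces $\exp(\R A)$ to uniformly contract $N$ toward $\xi$, which is incompatible with a nontrivial reductive part or a nonterminating derived series in $\mathfrak{n}$. Granting this, pick $A$ with $\lambda|_{\exp(\R A)}$ injective and $A$ normalizing $N$; then $S := N \rtimes A$ is connected, solvable, and transitive on $X$, and after checking the isotropy $S_o$ is trivial — or replacing $S$ by a simply transitive subgroup, which exists here — the orbit map $S \to X$, $g \mapsto g o$, is a diffeomorphism. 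Pulling the metric of $X$ back along it realizes $X$ isometrically as $S$ with a left-invariant metric, and restricting this metric to $T_e S = \mathfrak{s}$ presents $X$ as a solvable Lie algebra with an inner product.
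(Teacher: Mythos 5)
The paper does not prove this statement: it is imported verbatim from Heintze \cite{Heintze74_Homogeneous} as background, so there is no internal argument to measure yours against. Judged on its own terms, your outline follows the historically correct route (pinching via compactness of the Grassmannian of $2$--planes plus homogeneity, Kobayashi for simple connectivity, passage to a boundary stabilizer, the Busemann homomorphism, a horospherical subgroup), but two of its steps are genuine gaps rather than routine verifications.

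First, your ``first lemma'' is false as stated: for an \emph{arbitrary} $\xi \in \cone X$ the stabilizer $G_\xi$ need not act transitively on $X$. Take $X$ itself a generic Heintze group $S = N \rtimes \R$, where $\Isom_0(X)$ is $S$ up to compact factors; then $S$ fixes one boundary point $\xi_0$ and acts simply transitively on $\cone X \setminus \{\xi_0\}$, so for $\xi \neq \xi_0$ the stabilizer $G_\xi$ is essentially the one-parameter translation group along the geodesic joining $\xi_0$ to $\xi$ --- nowhere near transitive. The correct statement is existential: there is \emph{some} $\xi$ with $G_\xi$ transitive, and establishing this is itself a structure-theoretic argument (either the radical of $G$ forces a global fixed point at infinity, or one shows $X$ is rank-one symmetric and invokes the Iwasawa decomposition). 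The decomposition $\mathfrak{g} = \mathfrak{g}_o + \mathfrak{g}_\xi$ does not follow merely from compactness of point stabilizers.

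Second, the step you yourself flag as decisive --- that $\ker\lambda$, or a subgroup of it still transitive on horospheres, is nilpotent, and more generally that $G_\xi$ contains a closed solvable subgroup still transitive on $X$ --- is the actual content of Heintze's theorem and is only gestured at. Note that $\ker\lambda$ itself is typically \emph{not} nilpotent: for $\Quat \Hyp^n$ it is $MN$ with $M$ a nonabelian compact group, so the task is to split off the compact reductive part while retaining transitivity; that is precisely where pinched negative curvature (via the contraction properties of $\ad A$ on $\mathfrak{n}$) must be used, and no argument is offered. Finally, the existence of a simply transitive subgroup inside a transitive solvable group of isometries also requires a (short) proof. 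As written, the proposal is an accurate table of contents for Heintze's proof rather than a proof.
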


Because of this identification, we will often move without comment between inner products on Lie algebras and left-invariant metrics on the associated manifold. In \cite{Heintze74_Homogeneous}, it is further observed that for such a solvable Lie group $\mathfrak{g}$, it is the case that $[\mathfrak{g},\mathfrak{g}]^\perp$ is one dimensional and that $ [\mathfrak{g},\mathfrak{g}]$ is nilpotent. A Lie group which admits a left-invariant metric realizing it as a Heintze space is called a \emph{Heintze group}. 

\begin{defn}
Let $G$ be a connected, simply connected, solvable Lie group with Lie algebgra $\mathfrak{g}$. The Lie algebra $[\mathfrak{g},\mathfrak{g}]$ is called the \emph{derived subalgebra} and the Lie group associated to $[\mathfrak{g},\mathfrak{g}]$ is called the \emph{derived subgroup}. A vector in $[\mathfrak{g},\mathfrak{g}]^\perp$ is called \emph{vertical}. A \emph{vertical plane} is a 2-dimensional subspace of $\mathfrak{g}$ that contains a vertical vector.
\end{defn}

Some important properties of these spaces come from Theorem~3 in \cite{Heintze74_Homogeneous}, which tells us that a solvable Lie algebra is one that comes from a Heintze space exactly when its derived subalgebra has codimension 1 and admits a contracting (equivalently expanding) automorphism. In certain cases of interest to us, this subalgebra will have stronger properties which allows us to pick a natural candidate for a metric on the associated manifold.

\begin{defn}
A \emph{stratification} on a nilpotent Lie algebra $\mathfrak{n}$ is a decomposition $ \mathfrak{n} = \mcv_1 \oplus \mcv_2 \oplus \ldots \oplus \mcv_s $ with the following properties:

\begin{itemize}
    \item $[\mcv_1, \mcv_j] = \mcv_{j+1}$
    \item $[\mcv_1, \mcv_s] = 0$
\end{itemize}

The subspaces $\mcv_i$ are called the \emph{layers} and $\mcv_1$ the \emph{horizontal layer}, and the simply connected, connected Lie group associated to $\mathfrak{n}$ is called a \emph{Carnot group}. More information on these groups is available in \cite{primer}.
\end{defn}

A stratification as above is also sometimes referred to as a \emph{Carnot grading}. Observe that a consequence of the above conditions is that $[\mcv_i, \mcv_j] \subset \mcv_{i+j}$. The focus on this specific subclass can be justified by the following result of Pansu.

\begin{thm}{\cite{pansu83}}
Let $G$ be a nilpotent Lie group equipped with a left-invariant Riemannian metric. Then the asymptotic cone of $G$ is isometric to a Carnot group equipped with a left-invariant sub-Riemannian metric.
\end{thm}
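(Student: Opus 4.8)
The plan is to identify the asymptotic cone explicitly as the Carnot group $G_\infty$ whose Lie algebra is the graded algebra associated to the lower central series of $\mfg = \operatorname{Lie}(G)$, and then to show that the rescaled pointed spaces $(G, \tfrac1t d, e)$ converge to $G_\infty$ equipped with a Carnot--Carathéodory metric. Since the cone only sees large-scale geometry, I would first reduce to the case that $G$ is simply connected: a connected nilpotent Lie group differs from a simply connected one by its maximal compact (central torus) subgroup $T$, and the quotient map $G \to G/T$ has uniformly bounded fibers, hence is a quasi-isometry onto the simply connected nilpotent group $G/T$. It therefore suffices to treat simply connected $G$.

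First I would set up the algebra. Write the lower central series $\mfg = \mfg_1 \supset \mfg_2 \supset \cdots \supset \mfg_{s+1} = 0$ with $\mfg_{i+1} = [\mfg, \mfg_i]$, and form the associated graded $\operatorname{gr}(\mfg) = \bigoplus_{i=1}^{s} \mcv_i$ with $\mcv_i = \mfg_i / \mfg_{i+1}$. The bracket descends to $\operatorname{gr}(\mfg)$, and because $\mfg_{i+1} = [\mfg_1, \mfg_i]$ one checks directly that $[\mcv_1, \mcv_i] = \mcv_{i+1}$ and $[\mcv_1, \mcv_s] = 0$, so $\operatorname{gr}(\mfg)$ is a stratification in the sense of the definition above; let $G_\infty$ be the corresponding Carnot group. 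The inner product on $\mfg$ restricts to $\mcv_1$ and, transported to $G_\infty$, determines a Carnot--Carathéodory metric $d_\infty$. The space $(G_\infty, d_\infty)$ is the candidate limit.

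Next I would choose a basis of $\mfg$ adapted to the filtration and use exponential coordinates of the second kind to identify both $G$ and $G_\infty$ with $\R^n$ as smooth manifolds. In these coordinates the Carnot dilations $\delta_t$ act by $t^i$ on the $\mcv_i$-block. The key structural point is that the rescaled group laws $\delta_{1/t}\bigl(\delta_t(x)\cdot\delta_t(y)\bigr)$ converge as $t\to\infty$ to the group law of $G_\infty$: in the polynomial (Baker--Campbell--Hausdorff) expression for multiplication in $G$, the only terms surviving the dilation are the homogeneous leading terms, which assemble into the graded bracket. This is the group-level statement that $G_\infty$ is the tangent cone of $G$ at infinity.

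The analytic heart of the argument, and the step I expect to be the main obstacle, is the metric comparison, a \emph{ball--box estimate}: with respect to the adapted coordinates and the homogeneous gauge $\|\cdot\|$ defined using $\delta_t$, one must show $d(e,g) \asymp \|g\|$ up to multiplicative and additive constants, and more precisely that $\tfrac1t\, d(e,\delta_t g)$ converges uniformly on compacta to $d_\infty(e,g)$. The existence of this limit rests on a subadditivity/stable-norm argument along horizontal directions, while its identification with $d_\infty$ uses that horizontal curves in $G$ are, after rescaling, approximated by horizontal curves in $G_\infty$ and conversely; controlling the error terms in the group law uniformly is what makes this delicate. Once this convergence of pointed metric spaces is in hand, every asymptotic cone $\lim_\omega (G,\tfrac1{\lambda_n}d,e)$ is isometric to $(G_\infty,d_\infty)$, a Carnot group, which completes the proof.
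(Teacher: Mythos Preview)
The paper does not prove this statement: it is quoted as a background result due to Pansu \cite{pansu83}, with no proof or sketch given. There is therefore nothing in the paper to compare your proposal against.

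That said, your outline is the standard strategy for Pansu's theorem and is essentially correct in structure: reduce to the simply connected case, build the associated graded Lie algebra as the candidate Carnot group, observe that the rescaled group law converges to the graded one via Baker--Campbell--Hausdorff, and then establish the metric convergence. You are right to flag the metric step as the crux. One point to be careful about: the statement as quoted says ``left-invariant metric,'' not ``left-invariant Riemannian metric,'' so the limiting Carnot--Carath\'eodory metric on $G_\infty$ need not come from an inner product on $\mcv_1$ but rather from a norm (the stable norm of the original metric restricted to horizontal directions). Your sketch implicitly assumes the Riemannian case when you say the inner product restricts to $\mcv_1$; in the general quasi-isometry class the limiting horizontal norm may be a genuine (non-Euclidean) norm, and the resulting $d_\infty$ is still a Carnot--Carath\'eodory metric in the broader sense. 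This does not change the conclusion that the cone is a Carnot group, but it affects how you phrase the identification of $d_\infty$.
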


We should note from this setup that the degree of the stratification matches the step of nilpotency of the Lie group, recalling that $[\mathfrak{g},\mathfrak{g}]$ will always be nilpotent. However, not every nilpotent Lie group admits a stratification (see Examples 2.7 and 2.8 in \cite{primer}).

\begin{defn}
\label{defn:heintzecarnot}
A Heintze group $G = N \rtimes_\phi \R$ is said to be \emph{of Carnot-type} if the derived subalgebra admits a stratification and there is a nonzero value $\lambda$ such that semi-direct product is given by $\phi : \R \rightarrow \Aut(\mfn)$, where $\phi_t$ is defined by  $\phi_t (v_i) = e^{i \lambda t} v_i$ whenever $v_i \in \mcv_i$.
\end{defn}

Usually the normalizing assumption is made that $\lambda=1$. More background on Heintze groups of Carnot-type is available in \cite{newdirections} and \cite{yves}.


\section{Metrics on Heintze Groups of Carnot-type}
\label{sec:layered}

It is a well-known fact (a good exposition is available in \cite{anton}) that both $\R \Hyp ^n$ and $\C \Hyp ^n$ can be viewed as Heintze groups of Carnot-type equipped with a particular Riemannian metric. We use the properties that sectional curvature has in symmetric spaces to motivate the metric we will put on Heintze groups of Carnot-type. In particular we make the following observation, which is a classical fact about symmetric spaces.

Recall that a vertical vector is one such that $V \in \mathfrak{heis}^\perp$, the space of which must be one-dimensional by \cite{Heintze74_Homogeneous}. The following proposition can be found to be a consequence of Proposition~3.16 in \cite{QP}, proven for arbitrary Heintze groups of Carnot-type with derived subgroup of step 2.

\begin{prop}{\cite{QP}}
\label{prop:SymSpaceK}
Consider the Riemannian manifold $\K \Hyp^n$ for $\K \in \{ \R, \C, \Quat, \Oct\}$ viewed as an upper half-space with derived subalgebra $\mathfrak{heis} =\K^n \oplus \text{\emph{Im}} (\mathbb{K})$, normalized so that the supremum of its sectional curvature is $-1$. This space has the following curvature properties ($V,W \in T_p \K \Hyp^n$, $V$ vertical):
\[
K(V,W) = -1 \text{ if } W \in (dL_p)_e \K^n  \] \[
K(V,W) = -4 \text{ if } W \in (dL_p)_e \text{\emph{Im}} (\mathbb{K}) 
\]
\end{prop}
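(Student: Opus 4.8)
The plan is to realize $\K\Hyp^n$ explicitly as the solvable Lie group $S = \mathfrak{heis}(\K^n) \rtimes_\phi \R$ of Carnot-type, equip it with the left-invariant metric in which the chosen basis of $\mcv_1 = \K^n$, the basis of $\mcv_2 = \text{Im}(\K)$, and the vertical vector $A$ are all orthonormal, and then compute the sectional curvatures of the two families of vertical planes directly from the structure constants. Since the metric is left-invariant, it suffices to work at the identity $e$, so that $(dL_p)_e\K^n$ and $(dL_p)_e\text{Im}(\K)$ become just $\mcv_1$ and $\mcv_2$ inside $\mfs = T_e S$. I would set up the Levi-Civita connection via the Koszul formula in the orthonormal left-invariant frame; because the only nonzero brackets are $[A,v_i] = i v_i$ for $v_i \in \mcv_i$ (here $i \in \{1,2\}$) and $[v,w] = \text{Im}(\bar v w) \in \mcv_2$ for $v,w \in \mcv_1$, the connection coefficients $\nabla_X Y$ are explicit low-degree polynomials in these constants.

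Next I would assemble $K(A,W) = \langle R(A,W)W, A\rangle$ for $W$ a unit vector in $\mcv_1$ and then for $W$ a unit vector in $\mcv_2$. For $W = v \in \mcv_1$: from $[A,v] = v$ one gets $\nabla_A v = 0$, $\nabla_v A = -v$, $\nabla_v v = A + (\text{a } \mcv_2\text{-term from } [v,v]=0$, which vanishes$)$, and the curvature tensor collapses to give $K(A,v) = -1$, establishing $(\dagger)$. For $W = z \in \mcv_2$: from $[A,z] = 2z$ one gets $\nabla_A z = 0$, $\nabla_z A = -2z$, $\nabla_z z = 2A$, and the analogous computation yields $K(A,z) = -4$, establishing $(\dagger\dagger)$. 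The cross terms $\langle [v,v'], z\rangle$ contribute to curvatures of non-vertical planes but not to $K(A,\cdot)$ since $A$ is a bracket-eigenvector direction orthogonal to everything; I would verify this orthogonality is exactly what kills the would-be extra terms. Finally I would check the normalization: with these brackets the supremum of all sectional curvatures over $\mfs$ is indeed $-1$ (attained on the $\mcv_1$-vertical planes), so no rescaling is needed, matching the hypothesis in the statement.

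I expect the main obstacle to be bookkeeping rather than conceptual: correctly handling the $\text{Im}(\K)$-valued bracket $[v,w] = \text{Im}(\bar v w)$ when $\K$ is noncommutative or nonassociative (quaternions, octonions), and making sure the Koszul-formula terms involving $\nabla_{\mcv_1}\mcv_1 \subset \mcv_2 \oplus \R A$ are accounted for even though they ultimately do not affect $K(A,W)$. A clean way to manage this is to observe that the subalgebra spanned by $A$, a single $v \in \mcv_1$ of unit norm, and $z = [v, iv]$ (or the relevant imaginary unit) together with $\R A$ is itself closed under bracket and isometrically a copy of the complex-hyperbolic-plane solvable group (for $\K \neq \R$) or the real-hyperbolic-plane group (for $\K = \R$); restricting the computation to this $3$- or $2$-dimensional subalgebra reduces both $(\dagger)$ and $(\dagger\dagger)$ to the classical $\R\Hyp^2$-type computation with curvature $-1$ after the appropriate rescaling by $i^2$. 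This is essentially the appendix argument the paper promises, and it sidesteps the full-dimensional curvature tensor entirely.
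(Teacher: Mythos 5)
Your route is genuinely different from the paper's. The paper proves this proposition in the appendix synthetically, in the projective model of Bridson--Haefliger: it shows the plane spanned by a vertical vector and a $\K^n$-direction is totally real (hence an embedded $\R \Hyp^2$ of curvature $-1$), while the plane spanned by a vertical vector and an $\text{Im}(\K)$-direction lies in a $\K$-affine line (hence a copy of $\R \Hyp^2_{-4}$), quoting Theorem~10.16 of that book. Your Lie-theoretic computation is instead essentially the paper's own Proposition~\ref{prop:vert} specialized to $s=2$; the connection coefficients and the resulting values $K(A,v)=-1$ and $K(A,z)=-4$ are correct as you state them, and that part of the argument is sound (and arguably cleaner, since the vertical curvatures never see the Heisenberg bracket).

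The gap is in identifying your left-invariant metric with the symmetric metric of $\K \Hyp^n$. The proposition concerns the symmetric space normalized so that the supremum of sectional curvature is $-1$; you build a left-invariant metric making the standard basis of $\K^n \oplus \text{Im}(\K) \oplus \R A$ orthonormal with $[v,w]=\text{Im}(\bar{v}w)$, and assert that its curvature supremum is $-1$ so ``no rescaling is needed.'' First, a supremum of $-1$ would not identify the metric as symmetric even if verified (the paper's $H$-metrics on non-symmetric Heintze groups all have supremum $-1$). Second, and concretely, your normalization is not the symmetric one: for unit $v\in\C^n$ one has $\|[v,iv]\|=1$, so the formula 1.5~b) of \cite{QP} used in the proof of Theorem~\ref{thm:main} gives $K(v,iv)=-\tfrac{3}{4}\|[v,iv]\|^2-1=-\tfrac{7}{4}$, whereas the holomorphic sectional curvature of $\C\Hyp^n$ must equal $-4$; the symmetric metric requires $\|[v,iv]\|=2$. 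Your vertical-plane computation is insensitive to this (it uses only $|A|=1$, orthogonality of the layers, and the eigenvalues $1,2$ of $\ad A$), so the proof is repairable: either rescale the $\text{Im}(\K)$ factor to the $H$-type normalization and check the result is the symmetric metric, or cite the Iwasawa decomposition of $\Isom(\K\Hyp^n)$ to see that the canonical metric is left-invariant with exactly the properties your computation needs. Your closing shortcut of restricting to the subalgebra spanned by $A$, $v$, $[v,iv]$ has the additional problem that a subgroup with the induced inner product need not be totally geodesic, so its intrinsic curvature need not equal the ambient sectional curvature without further argument.
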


Recall that a metric on a Lie group $G$ is called \emph{left-invariant} if the left group action by $G$ is an action by isometries. We will only be interested in left-invariant, Riemannian metrics. In practice such a metric will come from a bilinear form on $\mathfrak{g}$, from which one defines the metric everywhere on $G$ by declaring the left group action to be by isometries. If $G$ is a Carnot group, we can make sense of the layers of $\mathfrak{g}$ at all points by declaring the following:

\begin{defn}
\label{def:layer}
Let $N$ be a stratified nilpotent Lie group with $\mathfrak{n} = \oplus_i^s \mcv_i$, where each $\mcv_i$ is equipped with a norm $|| \cdot ||_{\mcv_i}$. Define the \emph{layers} of $N$ to be the following sub-tangent bundles, defined at every point $p \in N$ as:
\[ \Delta^p_i := (dL_p)_e \mcv_i \]
where $dL_p$ represents the differential associated to left translation by $p$. Equip each of these subtangent spaces with the following norm:
\[  ||(dL_p)_e v || := ||v|| \]
\end{defn}

Our goal now is to pick out a Riemannian structure on an arbitrary Heintze group of Carnot-type that generalizes the properties we observe for rank-one symmetric spaces. Corollary~\ref{cor:nobiinv} below is intended to justify the use of a left-invariant, rather than bi-invariant, metric.

\begin{thm}{\cite{wolf}}
Let $N$ be a nilpotent Lie group which is not abelian. Then the Riemannian manifold obtained by equipping $\mathfrak{n}$ with an inner product which is extended to all tangent spaces by left-translation, satisfies the following condition at every point: \\

\noindent There exist tangent planes $R,S,T$ such that
\[ K(R) < 0 = K(S) < K(T) \]
where $K$ represents the sectional curvature.

\end{thm}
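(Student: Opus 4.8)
The plan is to work entirely at the level of the Lie algebra $\mathfrak n$ with its inner product: since the metric is obtained by left-translation it is left-invariant, so the sectional curvature function is $L_p$-invariant and it suffices to produce the three planes inside $\mathfrak n = T_eN$ and then push them forward by $(dL_p)_e$. The only curvature computation I would carry out by hand is the one for a plane containing a \emph{central} direction. Writing $j(Z)$ for the skew-symmetric endomorphism of $\mathfrak n$ defined by $\langle j(Z)X,Y\rangle = \langle[X,Y],Z\rangle$, the Koszul formula gives, for $Z$ in the center $\mathfrak z$, the identities $\nabla_Z Z = 0$ and $\nabla_Z X = \nabla_X Z = -\tfrac12\,j(Z)X$; substituting these into $R(Z,X)X$ collapses all but one term and yields, for orthonormal $Z,X$ with $Z$ central, $K(Z,X) = \tfrac14\,|j(Z)X|^2 \ge 0$, with strict inequality exactly when $j(Z)X\ne 0$.

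To produce the positively curved plane $T$, I would use that $N$ is not abelian: its lower central series has length $s\ge 2$, and its last nonzero term $\mathfrak n^{s} = [\mathfrak n,\mathfrak n^{s-1}]$ is a nonzero subspace lying simultaneously in $[\mathfrak n,\mathfrak n]$ and in $\mathfrak z$. Choose a unit $Z\in\mathfrak n^{s}$. If $j(Z)$ were zero we would have $[\mathfrak n,\mathfrak n]\perp Z$, but $Z\in\mathfrak n^{s}\subseteq[\mathfrak n,\mathfrak n]$ would then force $\langle Z,Z\rangle=0$; hence $j(Z)\ne 0$, so some $X$ has $j(Z)X\ne 0$. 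Because $Z$ is central we have $j(Z)(X-cZ)=j(Z)X$ for every scalar $c$, so we may replace $X$ by its component orthogonal to $Z$ and normalize; then $T:=\Span\{Z,X\}$ satisfies $K(T)=\tfrac14|j(Z)X|^2>0$.

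For the negatively curved plane $R$, I would invoke Milnor's classical analysis of left-invariant metrics: a nilpotent Lie group carrying a non-flat left-invariant metric has a tangent direction of strictly negative Ricci curvature, and a nilpotent Lie group admitting a flat left-invariant metric must be abelian. Since $N$ is non-abelian the metric is non-flat, so there is a unit $v$ with $\mathrm{Ric}(v)<0$; writing $\mathrm{Ric}(v)=\sum_i K(v,e_i)$ for an orthonormal basis completing $v$, some $K(v,e_j)<0$, and I set $R:=\Span\{v,e_j\}$. For the flat plane $S$, I would argue softly: $K$ is a continuous real-valued function on the Grassmannian $\mathrm{Gr}_2(\mathfrak n)$, which is connected, so joining $R$ to $T$ by a path and applying the intermediate value theorem gives a plane $S$ with $K(S)=0$. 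Transporting $R,S,T$ by $(dL_p)_e$ then yields planes in $T_pN$ with the same curvatures, which is the assertion.

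The hard part will be justifying the existence of the negatively curved plane in the higher-step case. In step $2$ one can be explicit, since for $X,Y$ orthogonal to the center with $[X,Y]\ne 0$ the symmetric (``$U$'') terms in Milnor's sectional-curvature formula vanish and $K(X,Y)=-\tfrac34|[X,Y]|^2<0$; but once the nilpotency step exceeds $2$ those terms acquire no obvious sign, which is why the cleanest route is to quote Milnor's negative-Ricci (equivalently, strictly negative scalar curvature) statement rather than to attempt a direct structure-constant estimate. Everything else — the Koszul computation, the lower-central-series facts, and the continuity argument — is routine.
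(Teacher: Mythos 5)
The paper does not prove this statement --- it is quoted from \cite{wolf} without proof --- so there is no internal argument to compare against; your proposal has to be judged on its own, and it is correct. All three ingredients check out. For the positive plane: with $Z$ central the Koszul formula does give $\nabla_Z Z=0$ and $\nabla_Z X=\nabla_X Z=-\tfrac12 j(Z)X$, and the curvature collapses to $K(Z,X)=\tfrac14|j(Z)X|^2$ for orthonormal $Z,X$ (one can sanity-check against Milnor's Heisenberg computation, where the central planes have curvature $+\tfrac14$); your choice of $Z$ in the last nonzero term of the lower central series correctly forces $j(Z)\neq 0$, since $j(Z)=0$ would make $Z$ orthogonal to $[\mathfrak{n},\mathfrak{n}]\ni Z$, and the orthogonalization step is harmless because $j(Z)Z=0$. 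For the negative plane, quoting Milnor's theorem that a non-abelian nilpotent Lie algebra admits, for every left-invariant metric, a direction of strictly negative Ricci curvature is legitimate here (the statement being proved is itself an imported classical fact), and expanding $\mathrm{Ric}(v)=\sum_i K(v,e_i)$ to extract a negative basis plane is correct; your detour through flatness is redundant, since Milnor's result applies directly to the non-abelian nilpotent case. The intermediate-value argument on the connected Grassmannian $\mathrm{Gr}_2(\mathfrak{n})$ (connected because $\dim\mathfrak{n}\geq 3$ for a non-abelian nilpotent algebra) cleanly supplies the flat plane, which is a slicker device than exhibiting one explicitly. Your closing caveat is also accurate and worth keeping: the explicit identity $K(X,Y)=-\tfrac34|[X,Y]|^2$ is exactly what this paper later computes in the proof of Theorem~\ref{thm:main} for planes in $\mcv_1$ of a layered basis, but that computation uses an inner product making the layers orthogonal, so it cannot replace the Ricci argument for an arbitrary inner product on an arbitrary (possibly non-stratifiable) nilpotent $\mathfrak{n}$.
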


The following corollary follows from the observation that sectional curvature is always non-negative for Lie groups equipped with bi-invariant metrics (see, for instance, Chapter~4 of \cite{doCarmo_Riemannian}).

\begin{cor}{\cite{wolf}}
\label{cor:nobiinv}
Any nonabelian nilpotent Lie group does not admit a bi-invariant metric.
\end{cor}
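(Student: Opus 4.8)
The plan is to obtain this statement as a direct consequence of the theorem of Wolf just quoted, together with the classical fact that bi-invariant metrics have non-negative sectional curvature. First I would recall the latter: if a Lie group $G$ carries a bi-invariant metric, then its Levi-Civita connection is given on left-invariant vector fields by $\nabla_X Y = \tfrac12 [X,Y]$, so that for an orthonormal pair of left-invariant fields $X,Y$ one computes $K(X,Y) = \tfrac14 \| [X,Y] \|^2 \geq 0$. This is standard (see Milnor or do Carmo), and I would cite it rather than reprove it.

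Next I would argue by contradiction. Suppose a nonabelian nilpotent Lie group $N$ admits a bi-invariant metric $g$. A bi-invariant metric is in particular left-invariant, hence arises from an inner product on $\mathfrak{n}$ extended to all tangent spaces by left translation. The hypotheses of Wolf's theorem are then met, and it produces a tangent plane $R$ with $K(R) < 0$. This contradicts the non-negativity of sectional curvature for bi-invariant metrics noted in the previous paragraph, so no bi-invariant metric on $N$ can exist.

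There is essentially no obstacle in this argument: the only step beyond citing known results is the trivial observation that a bi-invariant metric is left-invariant, after which the corollary is genuinely a corollary. It is worth remarking that the non-abelian hypothesis is used in an essential way — abelian nilpotent Lie groups are Euclidean spaces, which carry flat (hence bi-invariant) metrics — and it enters precisely through Wolf's theorem, whose conclusion $K(R) < 0$ fails when $N$ is abelian. One could equivalently phrase the whole statement as the assertion that a nilpotent Lie group equipped with a bi-invariant metric must be flat, and therefore abelian.
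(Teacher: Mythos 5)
Your argument is correct and is exactly the paper's: the corollary is derived by combining Wolf's theorem (which yields a plane of strictly negative curvature for any left-invariant metric on a nonabelian nilpotent group) with the standard fact that bi-invariant metrics have non-negative sectional curvature. Your write-up just makes explicit the contradiction that the paper leaves as a one-line remark.
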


Note that the information we have about arbitrary Heintze groups to this point is topological and algebraic in nature; we do not yet have a preferred way to equip the Lie algebra with an appropriate bilinear form that will allow us to measure distance and therefore curvature, such as is present in $\K \Hyp^n$. Instead of metrizing the semidirect product expression, we will use the following map to define an \emph{upper half-space model} for a Heintze group, which comes with a pullback Riemannian metric once a left-invariant metric is chosen on the derived subgroup.

\begin{defn}
{\cite{harmonic}}
\label{prop:harmonic}
Let $G$ be an Heintze group such that $G = N \rtimes \R$, where $N$ is the derived subgroup. Then the diffeomorphism \[ \Phi : N \times \R_+ \ni (n, y) \mapsto (n, \ln (y) ) \in G = N \rtimes \R \]
is called the \emph{generalized Cayley transform}. If $(G,g)$ is a Heintze group of Carnot-type equipped with a left-invariant Riemannian metric then
\[ \Phi^* g = \frac{1}{y^2} g|_{\mcv_1} + \frac{1}{y^4} g|_{\mcv_2} \ldots \frac{1}{y^{2s}} g|_{\mcv_s} + \frac{dy^2}{y^2} \]
is the \emph{Cayley pullback} of $g$, where $\sum g|_{\mcv_i}$ is a left-invariant metric on the derived subalgebra.
\end{defn}

We note that, importantly, this diffeomorphism preserves the set of vertical tangent vectors - the differential of this map preserves the property of being orthogonal to $N$ and the eigenspace decomposition of $\mathfrak{n}$ associated to the adjoint action of the vertical direction. In \cite{harmonic}, Nishikawa notes in Remark~3.1 that this choice of metric is far from unique, in both that the nilpotent metric is unspecified, and also that the $\ln$ function used could as easily be taken to be, for example, $\log_{e^2}$, which would result in denominators of $y, y^2 \ldots y^s, 4y^2$ in the formula for $\Phi^* g$. `In general', he remarks, `it is not clear a priori which choice should be canonical.'

We see that, when expressed as a matrix, the above tensor $\Phi^* g$ is block diagonal, where the size of each block is the rank of the vector space $\mcv_i$ with the final block being of size one. This block diagonal structure is true for any choice of isomorphism $\R \rightarrow \R_+$. Observe that the upper half-space model for rank one symmetric spaces represents a Cayley pullback metric.




\begin{defn}
Let $N$ be a Carnot group with a stratification of its algebra $\mathfrak{n} = \oplus_1^s \mcv_i$. A \emph{layered basis} on the tangent bundle of $N$ is the set left-translates (as in Definition~\ref{def:layer}) of orthogonal bases $\{e_{i,j} \}$ for each $\mcv_i$. In the event a metric is specified, we assume that a layered basis is orthonormal.
\end{defn}

Intersecting the layered basis with $T_p G$ will give an orthogonal basis for the nilpotent Lie algebra, and if the basis is taken together with $e_0$, a (vertical) vector in the positive $\R_+$ direction, we get a basis for the whole tangent space at an arbitrary point in a Heintze group of Carnot-type. We will abuse notation in this setting by referring to elements of any tangent space as linear combinations of $e_{i,j}, e_0$ without referencing the translation. We will also talk about a layered basis for a Heintze group of Carnot type as a layered basis for the Carnot subgroup together with a (unit) vertical vector. 

\begin{prop}{\cite{harmonic}}
\label{prop:adjoint}
Let $G$ be a Heintze group of Carnot-type. Then there exists a vertical vector $A$ such that when the Lie algebra decomposes $\mathfrak{g}= (\oplus \mcv_i) \oplus \R A$, then the $\mcv_i$ are eigenspaces of the action by $\ad A$ with eigenvalues $i$ in the Lie structure on $N \times \R_+$ induced by the generalized Cayley transform. 
\end{prop}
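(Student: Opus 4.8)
The plan is to transport the group structure of $G$ along the generalized Cayley transform and then read off the action of the $\mathbb{R}_+$--direction directly. Write $\mathfrak{g} = \mathfrak{n} \oplus \mathbb{R}T$ for the semidirect--product Lie algebra, with $T$ the generator of the $\mathbb{R}$ factor. By Definition~\ref{defn:heintzecarnot} the automorphisms are $\phi_t(v_i) = e^{i\lambda t} v_i$ for $v_i \in \mcv_i$, so $\ad T(v_i) = \tfrac{d}{dt}\big|_{t=0}\phi_t(v_i) = i\lambda\, v_i$; that is, the $\mcv_i$ are already the eigenspaces of $\ad T$ acting on $\mathfrak{n}$, with eigenvalues $i\lambda$.

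Next I would make the phrase ``Lie structure induced by the generalized Cayley transform'' precise by declaring $\Phi$ to be an isomorphism of Lie groups: this endows $N \times \mathbb{R}_+$ with a Lie group structure, its Lie algebra is identified with $T_{(e,1)}(N \times \mathbb{R}_+) = \mathfrak{n} \oplus \mathbb{R}\,\partial_y$, and $d\Phi_{(e,1)}$ becomes a Lie algebra isomorphism onto $\mathfrak{g}$. Since $\Phi(n,y) = (n, \ln y)$ fixes the $N$--coordinate and $\tfrac{d}{dy}\ln y\big|_{y=1} = 1$, the differential $d\Phi_{(e,1)}$ restricts to the identity on $\mathfrak{n}$ and sends $\partial_y \mapsto T$. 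Transporting the bracket back then gives $[\partial_y, v_i] = d\Phi_{(e,1)}^{-1}\bigl(\ad T(v_i)\bigr) = i\lambda\, v_i$, so in the induced structure $\mcv_i$ is the $i\lambda$--eigenspace of $\ad(\partial_y)$.

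Finally I would set $A := \lambda^{-1}\partial_y$ (so $A = \partial_y$ under the normalization $\lambda = 1$); then $\ad A(v_i) = i\,v_i$ for every $i$ and $\mathfrak{g} = (\oplus_i \mcv_i) \oplus \mathbb{R}A$. It remains to verify that $A$ is vertical, i.e.\ that $A \perp [\mathfrak{g},\mathfrak{g}]$. One checks $[\mathfrak{g},\mathfrak{g}] = \mathfrak{n}$: the quotient $\mathfrak{g}/\mathfrak{n} \cong \mathbb{R}$ is abelian, forcing $[\mathfrak{g},\mathfrak{g}] \subseteq \mathfrak{n}$, while $\ad T(v_1) = \lambda v_1$ together with $[\mcv_1,\mcv_j] = \mcv_{j+1}$ gives $\mathfrak{n} \subseteq [\mathfrak{g},\mathfrak{g}]$. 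On the other hand the horocyclic metric of Proposition~\ref{prop:harmonic} is block diagonal with the $\partial_y$--direction forming the terminal $1\times 1$ block $dy^2/y^2$, so $\partial_y \perp \mathfrak{n}$, hence $A \in \mathfrak{n}^{\perp} = [\mathfrak{g},\mathfrak{g}]^{\perp}$. The step demanding the most care is this transport of structure — in particular confirming that the scaling $\lambda^{-1}$ is exactly what is needed to pin the eigenvalues to $i$ rather than $i\lambda$, which is also where the choice of $\ln$ rather than some other logarithm in $\Phi$ enters (cf.\ Nishikawa's remark quoted above); no step is genuinely difficult.
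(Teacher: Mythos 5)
Your argument is correct. Note that the paper does not actually prove Proposition~\ref{prop:adjoint}; it is cited from \cite{harmonic} without proof, so there is no in-paper argument to compare against. Your reconstruction is the natural one: differentiating $\phi_t(v_i)=e^{i\lambda t}v_i$ at $t=0$ to get $\ad T(v_i)=i\lambda v_i$ on the semidirect-product side, transporting the bracket through $d\Phi_{(e,1)}$ (which is the identity on $\mathfrak{n}$ and sends $\partial_y\mapsto T$ because $\tfrac{d}{dy}\ln y|_{y=1}=1$), rescaling to $A=\lambda^{-1}\partial_y$ to pin the eigenvalues to $i$, and reading verticality off the block-diagonal form of the horocyclic metric together with $[\mathfrak{g},\mathfrak{g}]=\mathfrak{n}$. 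Your closing remark about the dependence on the choice of $\ln$ is apt and matches Nishikawa's remark quoted in the paper; it also explains the paper's follow-up sentence that $A$ is unit length only after a further normalization (here $|A|=\lambda^{-1}$, so unit length exactly when $\lambda=1$).
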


Furthermore, if we choose our Cayley pullback metric carefully, we may assume $[e_0,V_i] = i V_i$ (equivalently that $A$ is unit length). In essence, the generalized Cayley transform has replaced the adjoint action of $\R$, which is normally the exponential applied to a derivation, with the derivation itself.

We continue by providing an answer to the question of Nishikawa by proving the existence of the following class of Riemannian metric on any Heintze group of Carnot-type. 

\begin{thm}
\label{thm:main}
Let $G$ be a Heintze group of Carnot-type with a layered basis $\{ e_{i,j} \}$. Then $G$ admits a left-invariant Riemannian metric satisfying the following curvature conditions at any (equivalently every) point:

\[ -s^2 \leq K(e_{i,j},e_{k,\ell}) \leq -1 \]
\[ K(e_0, e_{i,j}) = -i^2 \]
where the set $\{e_{i,j}\}$ represents a layered basis and $e_0$ is vertical.
\end{thm}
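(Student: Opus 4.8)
The plan is to work in the upper half-space model $N \times \R_+$ with a horocyclic metric (Proposition~\ref{prop:harmonic}), where by Proposition~\ref{prop:adjoint} we may normalize so that $[e_0, V_i] = i V_i$ for $V_i \in \mcv_i$ with $e_0$ a unit vertical vector. The remaining freedom is the choice of inner product $\sum g|_{\frn_i}$ on the derived subalgebra; I would fix an orthogonal layered basis $\{e_{i,j}\}$ adapted to the stratification and treat the relative scalings of the layers as the parameters to be tuned. The two conditions are handled separately: the vertical-plane identity $K(e_0, e_{i,j}) = -i^2$ is essentially forced by the adjoint normalization, while the two-sided bound $-s^2 \le K(e_{i,j}, e_{k,\ell}) \le -1$ on horizontal planes is where the real work lies.

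First I would verify the vertical condition. Since $\mcv_i$ is the $i$-eigenspace of $\ad e_0$, and the metric is left-invariant with $e_0$ orthogonal to $N$ and of unit length, the sectional curvature $K(e_0, e_{i,j})$ is computed from the structure constants; one gets $K(e_0, e_{i,j}) = -\langle [e_0, e_{i,j}], [e_0, e_{i,j}]\rangle / (\|e_0\|^2 \|e_{i,j}\|^2 ) + (\text{correction terms involving } [e_0,\cdot]^*)$, and because $[e_0, e_{i,j}] = i\, e_{i,j}$ with the brackets $[e_{i,j}, e_0]$ landing back in the same (orthogonal) layer, the curvature collapses to exactly $-i^2$. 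This should follow cleanly from the Koszul/Heintze curvature formulas in \cite{Heintze74_Homogeneous}; it is the "warped product of hyperbolic type" computation, and I expect no obstruction here — it is precisely the mechanism that makes vertical $\K\Hyp^n$-planes have curvature $-1$ or $-4$ in Proposition~\ref{prop:SymSpaceK}.

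The substantive step is the bound on non-vertical layered planes. A plane $\Span(e_{i,j}, e_{k,\ell})$ lies in $\frn \oplus \R e_0$; I would apply the curvature machinery of Heintze and of Eberlein–Heber \cite{QP} (their decomposition of the curvature tensor of a solvmanifold of this type into a "warped/hyperbolic part" plus a "nilpotent part" coming from $\ad|_\frn$ and its adjoint). The hyperbolic part contributes exactly $-\tfrac{1}{4}(i+k)^2$-type terms — bounded below by $-s^2$ and above by $-1$ since $1 \le i,k \le s$ — and the nilpotent part is nonpositive (this is the Wolf-type phenomenon, and crucially it is $\le 0$ on these particular split planes even though it is positive on some mixed planes), so the upper bound $-1$ comes for free. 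The lower bound $-s^2$ is the delicate one: I must choose the layer scalings so that the nilpotent correction, which depends on the Lie bracket constants $[e_{i,j}, e_{k,\ell}] \in \mcv_{i+k}$ and on $i,k \le s$, does not push the curvature below $-s^2$. This is where I would introduce a large parameter: scale layer $\mcv_i$ so that its basis vectors have norm growing fast enough (a geometric progression $t^{i}$ with $t$ large) that all structure constants $c^{m}_{(i,j)(k,\ell)}$, when measured in the rescaled metric, become arbitrarily small, making the nilpotent part negligible and the curvature of every horizontal plane close to a value in $[-s^2+\epsilon, -1-\epsilon]$.

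The main obstacle I anticipate is making this rescaling argument uniform and compatible with the vertical condition: rescaling $\mcv_i$ interacts with the requirement $[e_0, V_i] = i V_i$ only through overall normalization (since $\delta$-eigenvalues are scaling-invariant), so that is safe, but I must check that shrinking the bracket constants in some layers does not simultaneously inflate curvature contributions of planes involving the highest layer $\mcv_s$ — the "vertical" term $-i^2$ with $i=s$ already saturates the lower bound $-s^2$, so any negative nilpotent correction on a plane meeting $\mcv_s$ would be fatal. The resolution is to note that on a plane $\Span(e_{s,j}, e_{k,\ell})$ with $k \ge 1$ the brackets $[e_0, \cdot]$ give eigenvalues $s$ and $k < $ mixed, so the hyperbolic part is strictly above $-s^2$, leaving room to absorb a small nilpotent correction; only genuinely vertical planes hit $-s^2$, and those are handled exactly. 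Quantifying this "room" and choosing the scaling parameter accordingly — essentially an $\epsilon$-management argument across finitely many layer-pairs — is the crux, but it is a finite computation once the curvature formula from \cite{QP} is in hand, and it is exactly the additional parameter tuning promised in the introduction for the proof of Theorem~\ref{thm:pinchedcurv}.
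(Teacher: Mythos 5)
Your overall architecture is the same as the paper's: pass to the half-space model with a horocyclic metric, get the vertical planes from the structure constants, and decompose the curvature of horizontal planes as $K = K^{\mathfrak{n}} - \bigl|\sqrt{D_0}X \wedge \sqrt{D_0}Y\bigr|^2$ while shrinking the nilpotent structure constants. But there is a genuine gap at the top layer. Your claim that ``only genuinely vertical planes hit $-s^2$'' is false whenever $\dim \mcv_s \geq 2$: the horizontal plane $\Span(e_{s,j}, e_{s,\ell})$ has hyperbolic part exactly $-s\cdot s = -s^2$, so there is no room to absorb any correction, and making $K^{\mathfrak{n}}$ merely small does not suffice --- a strictly negative nilpotent term (which your own nonpositivity assertion would permit) pushes the curvature below $-s^2$. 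What saves this case is that $\mcv_s$ is central in $\mathfrak{n}$ (since $[\mcv_s, \mcv_j] \subseteq \mcv_{s+j} = 0$), so every structure constant involving the two spanning vectors vanishes and Milnor's formula gives $K^{\mathfrak{n}} = 0$ exactly, whence $K = -s^2$ on the nose. You need this exact vanishing, not an estimate, and your proposal never invokes it.

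Two further problems. First, the assertion that the nilpotent part is nonpositive on all layered-basis planes is false: already in the Heisenberg algebra one has $K^{\mathfrak{n}}(e_{1,a}, e_{2,b}) = +\tfrac{1}{4}\alpha^2 > 0$ for a mixed-layer basis plane (this is Milnor's classical computation, and it is what the Wolf-type theorem quoted in the paper predicts). So ``the upper bound $-1$ comes for free'' is unjustified as stated; the correct argument splits cases: nonpositivity of $K^{\mathfrak{n}}$ holds (via Milnor, giving $-\tfrac34\|[X,Y]\|^2$) precisely on $\mcv_1\times\mcv_1$ planes, which is where the hyperbolic part equals $-1$ and nonpositivity is actually needed, while on all other mixed planes the hyperbolic part is at most $-2$ and smallness of $|K^{\mathfrak{n}}|$ absorbs a possibly positive correction. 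Second, the proposed rescaling $\|e_{i,j}\|\sim t^{i}$ is exactly the standard dilation $\delta_t$, a Lie algebra automorphism; in the renormalized orthonormal basis the structure constants get multiplied by $t^{i+k}/(t^{i}t^{k}) = 1$ and do not shrink at all. One needs a super-multiplicative scaling, i.e., scale $\mcv_i$ by $\lambda_i$ with $\lambda_{i+k} \ll \lambda_i\lambda_k$, built layer by layer as in Gromov's almost-flat argument --- this is the Gromov-value normalization the paper uses. A minor further slip: the hyperbolic part of a basis plane is $-ik$, not $-\tfrac14(i+k)^2$; the range $[-s^2,-1]$ happens to coincide, so your bounds survive, but the pointwise values differ.
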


A metric of this form will be called a \emph{layered metric}. Recall that vectors which are vertical in $N \rtimes \R$ remain vertical in the upper half-space model $N \times \R_+$.

Observe that layered metrics are `admissible' metrics in the sense of Definition~6.5 from Eberlein and Heber \cite{QP}. In particular, layered metrics on Heintze groups of Carnot-type based on 2-step groups will lie in the intersection of $QP \cap AM$, again in the sense of \cite{QP}.



Before presenting the proof of Theorem~\ref{thm:main}, we demonstrate the following proposition, which will get us half of the way there.

\begin{prop}
\label{prop:vert}
Let $G$ be a Heintze group of Carnot-type with Lie algebra $\mfg$ and $A$ be the vertical vector as in Proposition~\ref{prop:adjoint}. For any left-invariant metric $g$ on the derived subgroup, the Cayley pullback metric metric $h$ on $G$ which agrees with $g$ and sets $|A|_h=1$ has the following curvature property

\[ K_h( A, V_i ) = -i^2 \text{ for all $V_i \in \mcv_i$ }\]
\end{prop}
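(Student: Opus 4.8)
The plan is to compute the sectional curvature $K_h(A, V_i)$ directly from the structure constants of $\mathfrak{g}$ equipped with the horocyclic metric $h$, using the standard Koszul/Milnor formula for the curvature of a left-invariant metric in terms of brackets. By Proposition~\ref{prop:adjoint}, after the generalized Cayley transform the adjoint action of the vertical direction $A=e_0$ is semisimple with $[A, V_i] = i V_i$ for every $V_i \in \mcv_i$, and by our normalization $|A|_h = 1$. Since the metric $h$ restricts to the left-invariant metric $g$ on the derived subalgebra $\mathfrak{n}$ and $A \perp \mathfrak{n}$, I can choose $V_i \in \mcv_i$ of unit length. The key structural input is that $\mathfrak{n}$ is an ideal (it is the derived subalgebra), so $[A, V_i] \in \mathfrak{n}$, and that the action $\ad A$ is symmetric with respect to $h$ (this is where the eigenspace/orthogonality remark after Proposition~\ref{prop:harmonic} is used — the layers $\mcv_i$ are $h$-orthogonal because they are distinct eigenspaces of the self-adjoint operator $\ad A$, at least once one notes $S_0 = 0$; in fact since $h$ comes from a block-diagonal horocyclic tensor the layers are orthogonal by construction).

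First I would write down Milnor's formula: for orthonormal $X, Y$,
\[
K(X,Y) = \tfrac14 |[X,Y]|^2 - \tfrac12 \langle [X,Y],[Y,X] \rangle_{\text{sym}} + \cdots
\]
— more precisely I would use the clean form valid when one of the vectors, here $A$, satisfies $\langle [A,Z],W\rangle = \langle Z,[A,W]\rangle$ for all $Z,W$ (i.e.\ $\ad A$ symmetric), which collapses the curvature of the plane spanned by $A$ and a unit $V_i$ to an expression governed by $\ad A|_{\mathbb{R}V_i}$. Concretely, writing $\nabla$ for the Levi-Civita connection, one has $\nabla_A A = 0$ when $\ad A$ is symmetric and $A$ is unit, and $\nabla_{V_i} A = -(\ad A)^* V_i$ type terms; plugging in $[A,V_i] = i V_i$ and $[V_i, A] = -i V_i$ gives $\nabla_A V_i = 0$, $\nabla_{V_i} A = -i V_i$, $\nabla_{V_i} V_i = i A$ (up to the internal bracket contributions within $\mathfrak{n}$, which are orthogonal to $A$ and to $V_i$ in the relevant inner products and hence drop out of $\langle R(A,V_i)V_i, A\rangle$). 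Then
\[
K_h(A,V_i) = \langle R(A,V_i)V_i, A \rangle = -|\nabla_{V_i} A|^2 = -i^2,
\]
after checking the cross terms vanish.

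The main obstacle is bookkeeping: justifying that the bracket terms lying inside $\mathfrak{n}$ (the $[V_i, V_j]$-type contributions to $\nabla_{V_i} V_i$ and the mixed terms in the curvature tensor) genuinely do not contribute to the $(A, V_i, V_i, A)$ component of $R$. This requires knowing that $[V_i, V_i] = 0$ (true since we may take a single basis vector, or since $[\mcv_i,\mcv_i]\subset \mcv_{2i}$ is orthogonal to $\mcv_i$) and, more delicately, that the self-adjointness of $\ad A$ plus the ideal property force the cross terms $\langle \nabla_A V_i, \nabla_{V_i} A\rangle$ and $\langle \nabla_{[A,V_i]} V_i, A\rangle$ to reduce to the stated scalar. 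Since $[A,V_i] = iV_i$ is itself in the $i$-eigenspace, $\nabla_{[A,V_i]} V_i = i \nabla_{V_i} V_i = i^2 A$, which exactly accounts for the sign. I expect the cleanest writeup is to invoke the Iwasawa-type / standard formula for curvature of a rank-one-like solvable extension: for $G = N \rtimes \mathbb{R}$ with $\ad A$ symmetric positive-definite on $\mathfrak{n}$, the plane spanned by $A$ and an eigenvector $V$ with eigenvalue $\mu$ has $K = -\mu^2$. Here $\mu = i$, giving $K_h(A,V_i) = -i^2$, and this is precisely the content the appendix verification (for the symmetric-space case, yielding $-1$ and $-4$ at $i=1,2$) is meant to generalize.
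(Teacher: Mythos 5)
Your proposal is correct and follows essentially the same route as the paper: both compute the curvature of the plane spanned by $A$ and $V_i$ from the bracket data of the left-invariant metric, using that $\ad A$ acts diagonally on the ($h$-orthogonal by construction) layers with eigenvalue $i$ on $\mcv_i$, and that the internal bracket contributions $[\mcv_i,\mcv_j]\subset\mcv_{i+j}$ drop out by orthogonality. The paper packages this via Milnor's structure-constant formula (showing $\alpha_{0m}^m=\psi(m)=i$ and that all other relevant structure constants vanish, so $K=A_{0m}^0+A_{0m}^m=-i^2$), whereas you unpack the equivalent computation through the Koszul formula for the Levi-Civita connection.
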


\begin{proof}
A result of Milnor from \cite{milnor} tells us that for such an orthonormal basis $\{ f_i\}$
\[ K(f_a, f_b) = \sum_k A_{ab}^k \]

where, if $\alpha_{ij}^k$ represents the structure constants for $G$, 
\begin{align*}
A_{ab}^k &= \frac{1}{2} \alpha_{ab}^k ( -\alpha_{ab}^k + \alpha_{bk}^a + \alpha_{ka}^b) \\
& \quad - \frac{1}{4}(\alpha_{ab}^k - \alpha_{bk}^a + \alpha_{ka}^b)(\alpha_{ab}^k + \alpha_{bk}^a - \alpha_{ka}^b) - \alpha_{ka}^a \alpha_{kb}^b . 
\end{align*}

Reindex the layered basis $\{ e_0, e_{i,j}\}$ to $f_k$ with a function $\psi(k) = i$ if $f_k = e_{i,j}$ for some $j$ ($f_k \in \mcv_{\psi(k)}$). We can observe the following identities of the structure constants:
\[ \alpha_{nn}^m = 0 \ , \ \alpha_{0m}^m = \psi(m)  \]
\[ \alpha_{nm}^\ell = 0 \text{ if } \psi(\ell) \neq \psi(n)+\psi(m) \]
\[ \alpha_{0m}^n = 0, n \neq m  .\]

To see why these relations hold, recall that structure constants satisfy
\[ \alpha_{nm}^\ell = \langle [e_n, e_m], e_\ell \rangle. \]

In this formulation, the first above property follows immediately from the skew-symmetry of the Lie bracket, and the second from the definition of a stratification and the fact that the layers are orthogonal. The last two properties follow from the fact that adjoint action on the layered basis by $\{e_0\}$ is exactly the adjoint action from Proposition~\ref{prop:adjoint}.

To prove the proposition, let $m$ be such that $e_{i,j} = f_m$. Note that $\alpha_{0m}^\ell = 0$ unless $\ell=m$, as the vertical direction is orthogonal to all layers, and each layer is orthogonal to others, but the adjoint action contains $f_m$ as an eigenvector with eigenvalue $\psi(m)$. Thus

\[ K(e_0, e_{i,j}) = K(e_0, f_m) =  A_{0m}^0 + A_{0m}^m .\]
\vspace{.05in}

From here we get that $A_{0m}^0 = 0$ when $f_m$ lies in the $i$-th layer $\mcv_i$, as it will only contain one nonzero term ($\alpha_{0m}^m$) which is multiplied by a term which is zero. Direct calculation shows that because $\alpha_{0m}^m = \psi(m)=i$, $A_{0m}^m = -i^2$, verifying that $K(e_0, e_{i,j})=-i^2$.
\end{proof}

Thus $|A|=1$ will be a condition on any metric we put on Heintze groups of Carnot-type. Furthermore, the above result remains true upon scaling the layered basis, as the zero terms must remain zero by bilinearity of the inner product and Lie bracket.

The paper \cite{Heintze74_Homogeneous} describes a process by which he guarantees a Lie group admits a metric of negative curvature that relies on a particular parameter which controls the extent to which the geometry of the space is dominated by the vertical direction. Because our goal will be to conclude pinched negative curvature (and not just negative curvature at all), we rely on more than just a sole parameter such as the one used in the statement of Theorem~2 of \cite{Heintze74_Homogeneous}. We name one of these parameters here which will be present in the proof of Theorem~\ref{thm:main}.

\begin{defn}
Let $(N,g)$ be a metrized Lie group. If $g$ satisfies the following condition for all $X,Y \in \mfn$
\[ || [X,Y]||_g \leq k \ ||X||_g \ ||Y||_g \ \]
we say that $g$ has \emph{Gromov value} $k$.
\end{defn}

Observe that all nilpotent groups admit left-invariant, Riemannian metrics with Gromov value $k$ for any $k>0$. 

\begin{thm}[Gromov, \cite{gromov_almostflat}]
Let $N$ be a nilpotent Lie group and $g$ a left-invariant metric on $N$. If $g$ has Gromov value $k$, then all sectional curvatures satisfy
\[ K^N \leq 100 k^2. \]
\end{thm}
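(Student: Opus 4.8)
The plan is to read the sectional curvature of $(N,g)$ directly off its Levi-Civita connection and bound every term using the single estimate that the hypothesis supplies, namely $\|\ad_X\|\le k$ for every unit vector $X$, where $\|\ad_X\|$ denotes the operator norm of $\ad_X\colon\mfn\to\mfn$. Indeed, "Gromov value $k$" unwinds to exactly this: for $\|X\|=1$ and arbitrary $Y$, $\|\ad_X Y\|=\|[X,Y]\|\le k\|Y\|$. Since the constant $100$ in the statement is far from sharp, only crude bounds are needed; nilpotency itself plays no essential role here beyond guaranteeing that $g$ can be taken left-invariant, and since $N$ with a left-invariant metric is homogeneous it suffices to estimate at the identity.

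First I would fix an orthonormal pair $X,Y\in\mfn$ and recall the Koszul formula for a left-invariant metric,
\[ 2\langle \nabla_X Y, Z\rangle = \langle [X,Y],Z\rangle - \langle [Y,Z],X\rangle + \langle [Z,X],Y\rangle, \]
equivalently $\nabla_X Y = \tfrac12[X,Y] + U(X,Y)$ with $U(X,Y) = -\tfrac12\bigl(\ad_X^{*}Y + \ad_Y^{*}X\bigr)$, where $\ad^{*}$ is the metric adjoint. The elementary point is that $\|\ad_X^{*}\| = \|\ad_X\|\le k$ whenever $\|X\|=1$, from which one gets at once $\|\nabla_X X\| = \|\ad_X^{*}X\|\le k$, $\|U(X,Y)\|\le k$, $\|\nabla_X Y\|\le\tfrac32 k$, and more generally $\|\nabla_V Y\|\le\tfrac32 k\,\|V\|$ for any left-invariant $V$.

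Next I would expand $K(X,Y)=\langle R(X,Y)Y,X\rangle$ via $R(X,Y)Z=\nabla_X\nabla_Y Z-\nabla_Y\nabla_X Z-\nabla_{[X,Y]}Z$, using metric compatibility and the fact that inner products of left-invariant fields are constant functions, to reach the standard identity
\[ K(X,Y) = -\langle \nabla_X X, \nabla_Y Y\rangle + \langle \nabla_X Y, \nabla_Y X\rangle - \langle \nabla_{[X,Y]}Y, X\rangle. \]
Cauchy–Schwarz and the norm bounds above then give $|\langle \nabla_X X,\nabla_Y Y\rangle|\le k^2$, $|\langle \nabla_X Y,\nabla_Y X\rangle|\le\tfrac94 k^2$, and $|\langle \nabla_{[X,Y]}Y,X\rangle|\le\|[X,Y]\|\cdot\tfrac32 k\le\tfrac32 k^2$, whence $K(X,Y)\le\bigl(1+\tfrac94+\tfrac32\bigr)k^2<100\,k^2$. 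As $X,Y$ was an arbitrary orthonormal pair, this bounds $K^N$.

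The only step needing any care is the bookkeeping with $\ad^{*}$: one must check $\|\ad_X^{*}\|=\|\ad_X\|$ and that the symmetric part $U$ of $\nabla$ is genuinely built from $\ad^{*}$ as claimed, so that the a priori control of $\ad$ alone propagates to every summand. I do not expect a real obstacle. If one prefers to avoid the invariant formalism, an equivalent route is to start from Milnor's structure-constant expression $K(f_a,f_b)=\sum_k A_{ab}^k$ quoted earlier, observe that for fixed $i,j$ one has $\sum_\ell (\alpha_{ij}^\ell)^2=\|[f_i,f_j]\|^2\le k^2$, and bound the sum over $k$ term by term with Cauchy–Schwarz; this is messier but entirely mechanical, and the large slack in the constant $100$ makes either estimate undemanding.
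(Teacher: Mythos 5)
The paper does not prove this statement at all: it is quoted as a black-box citation to Gromov's almost-flat-manifolds paper, so there is no in-text argument to compare yours against. Your proof is correct and self-contained. The identification of ``Gromov value $k$'' with the operator bound $\|\ad_X\|\le k\|X\|$ is exactly right, the decomposition $\nabla_XY=\tfrac12[X,Y]-\tfrac12(\ad_X^{*}Y+\ad_Y^{*}X)$ is the standard Koszul computation for left-invariant fields, and the reduction of $\langle R(X,Y)Y,X\rangle$ to $-\langle\nabla_XX,\nabla_YY\rangle+\langle\nabla_XY,\nabla_YX\rangle-\langle\nabla_{[X,Y]}Y,X\rangle$ is valid because inner products of left-invariant fields are constant (I checked your sign conventions against the Heisenberg example, where your formula reproduces Milnor's $K=-\tfrac34$). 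The one step you flag as needing care, $\|\ad_X^{*}\|=\|\ad_X\|$, is immediate since adjoints preserve operator norm. Your bookkeeping actually yields $|K|\le\bigl(1+\tfrac94+\tfrac32\bigr)k^2=\tfrac{19}{4}k^2$, which is both two-sided and far sharper than the stated $100k^2$; this is consistent with Gromov's constant being deliberately generous. You are also right that nilpotency plays no role: the estimate holds for any Lie group with a left-invariant metric satisfying the bracket bound, which is the generality in which Gromov states it.
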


\begin{proof}[Proof of Theorem~\ref{thm:main}]
Let $G$ be our Heintze group of Carnot-type and $\mathfrak{g}$ the associated Lie algebra. We will construct an inner product on $\mathfrak{g}$ which comes from a Cayley pullback metric $h$ that satisfies certain additional conditions. We begin by assuming that $|A|_h = 1$. We are now guaranteed the vertical plane condition by Proposition~\ref{prop:vert}. Thus we must only check the condition on basis planes contained in $\mathfrak{n}$. Specify a layered basis for $\mfn$ as in Definition~\ref{def:layer} and choose the $h$ makes that basis orthonormal (this does not affect the norm of $A$). We recall 1.5~b) from \cite{QP} the following curvature for such planes, assuming $X,Y$ are orthonormal. Notice the following is valid for any Riemannian metric on a Heintze group.
\[ K(X,Y) = K^\mathfrak{n}(X,Y) - |\phi(X,Y)|^2 \]

\vspace{.1in}

where $\phi(X,Y) = \sqrt{D_0}X \wedge \sqrt{D_0}Y$, and $D_0$ is the symmetric part of $\ad A$ - the skew-symmetric part is labelled $S_0$. We claim that $D_0$ has the same eigenvectors and eigenspaces as $\ad A$ in the setting of a Cayley pullback metric. Indeed this will be true if the transformation $\ad A \curvearrowright \mathfrak{n}$ is symmetric, as of course in this case $S_0=0$. We can see this is due to the assumption that the eigenvalues of this transformation are real - by assumption they are exactly the set $\{1, 2, \ldots s\}$ - and the eigenvectors are mutually orthogonal by the assumption that our metric respects the layered basis. Thus in fact, if the transformation is written in the basis of these elements, it is not just symmetric but even diagonal. We examine the value of $|\phi(V_i, V_j)|^2$ where $V_i, V_j$ are distinct orthonormal vectors in $\mcv_i, \mcv_j$ respectively ($i$ possibly equal to $j$).

\begin{align*}
    |\phi(V_i, V_j)|^2 &= |\sqrt{D_0}V_i \wedge \sqrt{D_0}V_j|^2 \\
    &= | \sqrt{i}V_i \wedge \sqrt{j}V_j |^2 \\
    &=|\sqrt{ij} (V_i \wedge V_j)|^2 \\
    &= (\sqrt{ij})^2 = ij \\
\end{align*}

Note that this value is always in the interval $[1, s^2]$. From here, because $h$ is a Cayley pullback metric, it agrees with some left-invariant Riemannian metric $g$ on $\mathfrak{n}$. 
We can now equip $\mfn$ with a metric that has Gromov value $\epsilon$ for our choice of $\epsilon >0$.
In particular, per 4.4 in \cite{gromov_almostflat}, this can be done simply by scaling the norm on certain vectors -- thus preserving orthogonality of vectors. One should think of this as first fixing lengths of the basis for $\mcv_1$, and then scaling down the basis of $\mcv_2$ as needed to ensure that
\[ ||[e_{1,i},e_{1,j}]|| \leq \epsilon ||e_{1,i}|| \ ||e_{1,j}|| \]
This is then done step by step for $\mcv_2, \mcv_3 \ldots \mcv_s$, with the assurance that this process terminates due to the fact that a) each $\mcv_i$ has finite rank and b) $\mcv_{\ell} = 0, \ell > s$. 
Thus we may assume that the restriction of $h$ to $\mfn$ induces a metric with Gromov value $\frac{1}{20s}$.
With this condition in place, we relabel our layered basis to one that is orthonormal, by choosing basis vectors of unit length and parallel to the original vectors. We will again label this basis $\{e_{i,j}\}$. Recall that this does not affect the curvature of vertical planes by Proposition~\ref{prop:adjoint} - $A$ remains unit length and the new metric is still a Cayley pullback metric. We compute the $N$--curvature for a plane contained in $\mcv_1$, recalling the structure constants and the result of Milnor:
\begin{align*}
 K^n(e_{1,a},e_{1,b}) &= \sum_k A_{ab}^k = \sum_{e_{2,k}} A_{ab}^k \\
 &= \sum_{e_{2,k}} \frac{1}{2} \alpha_{ab}^k ( -\alpha_{ab}^k + \alpha_{bk}^a + \alpha_{ka}^b) \\
& \quad - \frac{1}{4}(\alpha_{ab}^k - \alpha_{bk}^a + \alpha_{ka}^b)(\alpha_{ab}^k + \alpha_{bk}^a - \alpha_{ka}^b) - \alpha_{ka}^a \alpha_{kb}^b  \\
&= \sum_{e_{2,k}} \big( \frac{1}{2}\alpha_{ab}^k(-\alpha_{ab}^k) - \frac{1}{4}(\alpha_{ab}^k)^2 \big) \\
&= \sum_{e_{2,k}} -\frac{3}{4}(\alpha_{ab}^k)^2 = \sum_{e_{2,k}} -\frac{3}{4} \langle [e_{1,a},e_{1,b}],e_{2,k} \rangle^2 \\
&=-\frac{3}{4} ||[e_{1,a},e_{1,b}]||^2 \leq 0
\end{align*}

While of course not all planes will have nonpositive $N$--curvature (all left-invariant metrics on nilpotent Lie groups must have planes with both positive and negative curvature), we will see it is important that ones of this form will. In particular, we calculate the curvature of a plane of the form $\pi = \langle e_{i,j}, e_{k, \ell} \rangle$. First assume it is not the case that both $i=1, k=1$ or that both $i=s, k=s$. Then we see
\vspace{-.1in}
\begin{align*}
 K(e_{i,j},e_{k,\ell}) &= K^N(e_{i,j},e_{k,\ell}) - |\phi(e_{i,j},e_{k,\ell})|^2 \\
 &= K^N(e_{i,j},e_{k,\ell}) - ik 
 \end{align*}
 
Now by the assumption that $|K^N| < \frac{1}{4s^2} < \frac{1}{4}$ because we picked a metric with Gromov value $\frac{1}{20s}$, we are assured this value is in $[-1, -s^2]$. If $i=1, k=1$, then $K = K^N-1$ which is in $[-1,-s^2]$ because $ -\frac{1}{4s^2} < K^N<0$. For the case that $i=s, k=s$, we see that $e_{s,j},e_{s,\ell}$ are in the center of $\mfn$, and so also we have that $K^N(e_{s,j},e_{s,\ell}) =0$. This is also readily seen to be a consequence of the Milnor theorem and the fact that $[e_{s,j},e_{s,\ell}]=0$.

We now argue that all the conditions placed on the metric $h$ may be realized simultaneously. Observe that we insisted:

\begin{itemize}
    \item $h$ restricts to a metric on $\mathfrak{n}$ with Gromov value $\frac{1}{20s}$.
    \item $|A|_h=1$
    \item $h$ respects the orthogonal nature of a layered basis $\{e_{i,j}\}$ and is a Cayley pullback metric.
\end{itemize}

These conditions are compatible due to the orthogonal decomposition of the Lie algebra $\mathfrak{s} = \R A \oplus \frn = \R A \oplus \mcv_1 \oplus \ldots \oplus \mcv_s$, which allows us to scale $\R A $ independently from $\frn$, and $\mcv_i$ independently from $\mcv_j$. We also see that the assumption of a specific Gromov value does not change the orthogonality of the chosen basis, only scaling elements of the form $[e,e']$. Therefore we may scale this way for any metric induced on $\mathfrak{n}$ by $h$.
\end{proof}

\section{Pinched Curvature in Lie Groups}

We give definitions here of pinched curvature in Riemannian manifolds and optimal pinching in Lie groups. The goal of this section is an optimal pinching result for Heintze groups of Carnot-type whose derived subgroups admit lattices.

\begin{defn}
A Riemannian manifold with all sectional curvatures in the interval $[a,b]$ with $a,b< 0$ is said to be $\frac{b}{a}$\emph{--pinched}.
\end{defn}

Usually, the metric will be normalized to allow $b=-1$ and the pinching constant will be a fraction with numerator $1$; observe that scaling the metric to change the supremum of curvature leaves this fraction invariant. We see from the definition of pinched curvature that if a metric on a manifold is not strictly negatively curved or has the property that the infimum of its sectional curvature is $- \infty$, then it is not $C$--pinched for any $C$. Of course, the definition of pinching in a Lie group must be adjusted slightly, as different left-invariant metrics may give different values for sectional curvature.

\begin{defn}
A Lie group $G$ is said to have \emph{optimal pinching} $C$ if it admits a left-invariant metric which realizes it as a Riemannian manifold that is $C$ pinched, but no metrics which are $C'$--pinched for $C' > C$.
\end{defn}

One should observe that being $C'$ pinched for $C'>C$ is a stronger condition, because it means that the value of $b$ is closer to $-1$, and therefore the manifold is closer to being constant curvature, which is the strongest possible condition. Clearly some Lie groups are not exactly $K$--pinched for any value of $K$ - for instance the derived subgroup of a solvable Lie group may have codimension greater than 1.

Eberlein and Heber in \cite{QP} provide algebraic conditions on a solvable Lie algebra that guarantee it admits an inner product which turns the corresponding Lie group into a manifold with sectional curvature $ -(2^2) =-4 \leq K \leq -1 = -(1^2)$. Such a manifold is called quarter-pinched. We see layered metrics exhibit a very similar type of behavior, saving the fact that the ratio of maximum vertical curvature to minimum vertical curvature is now the reciprocal of an integral square.

\begin{thm}
\label{thm:pinchedcurv}
Heintze groups of Carnot-type have optimal pinching of $\frac{1}{s^2}$ when the derived subgroup admits a lattice, where $s$ is the step nilpotency of the derived subgroup.
\end{thm}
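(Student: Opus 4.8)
The plan is to separate the statement into its two halves, only the second of which uses the lattice hypothesis. The existence of a left-invariant metric that is $\frac{1}{s^2}$--pinched is nothing more than Theorem~\ref{thm:main}: an $H$--metric realizes $G$ as a Riemannian manifold with $\sec\in[-s^2,-1]$, which is by definition $\frac{1}{s^2}$--pinched. Note that this half makes no use of the lattice. It therefore remains to show that $G$ admits no left-invariant metric that is $\frac{1}{L}$--pinched for $L<s^2$; after rescaling so that the top of the curvature spectrum is $-1$, this means no left-invariant metric with $\sec\in[-L,-1]$ for some $L<s^2$.

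Suppose for contradiction that $g'$ is such a metric. First I would observe that $(G,g')$ is a Hadamard manifold: $G$ is simply connected by definition of a Heintze group (equivalently, by Kobayashi's Theorem~\ref{thm:koba}, since a Heintze space is homogeneous, negatively curved, and has negative definite Ricci tensor), and left-invariant metrics on Lie groups are complete. Let $N=[G,G]$ be the derived subgroup and, using the hypothesis, let $\Gamma\le N$ be a lattice. Then $\Gamma$ is discrete in $N$, hence in $G$ (since $N$ is closed in $G$), so $\Gamma$ acts on $(G,g')$ by left translations freely ($\Gamma$ is torsion-free, being a lattice in a simply connected nilpotent Lie group) and properly discontinuously (a discrete subgroup of a Lie group always acts properly discontinuously by left translation). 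Thus $M:=\Gamma\backslash G$ is a complete Riemannian manifold with $\sec\in[-L,-1]$ and $\pi_1(M)\cong\Gamma$.

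The remaining input is the nilpotency class of $\Gamma$. Since $N$ is a Carnot group whose stratification has degree $s$, its nilpotency class is $s$, and a lattice $\Gamma\le N$ inherits this: $\Gamma\cap\gamma_i(N)$ is a lattice in $\gamma_i(N)$ for each $i$, so $\gamma_s(\Gamma)\ne 1$ while $\gamma_{s+1}(\Gamma)=1$. Now I invoke the main theorem of Belegradek and Kapovitch \cite{BeleKap}, which bounds the pinching of a complete negatively curved manifold in terms of the (graded) nilpotency data of its fundamental group; applied to $M$, whose fundamental group $\Gamma$ is a lattice in the Carnot group $N$ of step $s$, it forces $L\ge s^2$, contradicting $L<s^2$. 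Hence no such $g'$ exists, and together with the first paragraph this shows $G$ is exactly $\frac{1}{s^2}$--pinched.

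The main obstacle is checking that the hypotheses of the Belegradek--Kapovitch estimate are met exactly: their conclusion is phrased in terms of an invariant of the fundamental group built from a grading on an associated nilpotent Lie group, and one must confirm that for $\Gamma$ a lattice in the \emph{stratified} group $N$ this invariant is precisely the step $s$ (and not a smaller number produced by some other grading), as well as that their pinching bound applies to complete, not merely closed, manifolds. Granting this, the remaining ingredients --- completeness of $(G,g')$, simple connectivity, the free proper action of $\Gamma$, descent of the metric, and the identification $\pi_1(M)\cong\Gamma$ --- are routine.
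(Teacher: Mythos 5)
Your second half (optimality) is essentially the paper's argument: pass to the quotient $\Gamma\backslash G$ by the cocompact lattice $\Gamma\le N$, note that $\Gamma$ is $s$-step nilpotent and that the locally isometric covering preserves the set of sectional curvatures, and invoke Belegradek--Kapovitch to force $L\ge s^2$. That part is fine, and you are also right that the lattice hypothesis enters only there.

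The genuine gap is in your first paragraph. You assert that the existence of a $\frac{1}{s^2}$--pinched metric ``is nothing more than Theorem~\ref{thm:main},'' but Theorem~\ref{thm:main} only controls $K(e_{i,j},e_{k,\ell})$ and $K(e_0,e_{i,j})$, i.e.\ the curvatures of planes spanned by \emph{pairs of layered basis vectors}. Being $\frac{1}{s^2}$--pinched requires $\sec\in[-s^2,-1]$ for \emph{every} tangent $2$-plane, and sectional curvature on the Grassmannian of $2$-planes is not determined by its values on coordinate planes --- the curvature tensor has mixed components that do not appear in any $K(e_a,e_b)$. This is exactly where the paper does its real work: it introduces the quantity $T(U,V)=\langle(\triangledown_U^N\ad A)V-(\triangledown_V^N\ad A)U,V\rangle$, proves in Lemma~\ref{lem:pi} that $|T|<\frac12$ (and $T=0$ when the plane meets $\mcv_1$ or $\mcv_s$) using the Gromov value $\frac{1}{20s}$, and then runs a two-case analysis via the Eberlein--Heber formulas 1.5~b) and~c): planes inside $\mathfrak{n}$ are handled through $K=K^{\mathfrak{n}}-|\sqrt{D_0}U\wedge\sqrt{D_0}V|^2$, and planes containing a vertical component through $K=-\alpha^2\langle N_0V,V\rangle+\beta^2K(U,V)+2\alpha\beta T$, with the $|T|<\frac12$ bound needed precisely to keep the cross term from pushing the curvature outside $[-s^2,-1]$. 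Without this step your proof only shows that the basis planes are correctly pinched, which is not the statement being proved.
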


To prove this theorem, we first observe the following lemma.

\begin{lem}
\label{lem:pi}
Let $G$ be a Heintze group of Carnot-type equipped with a layered metric where the left-invariant metric on $N$ has Gromov value $\frac{1}{20s}$. Then for any tangent plane $\pi \subset \mathfrak{s}$ there exist orthonormal vectors $\alpha A + \beta U,V$ generating $\pi$ such that the following is true for the value $T=T(U,V)= \langle (\triangledown_U^N \ad A) V -(\triangledown_V^N \ad A) U, V \rangle$

\begin{itemize}
    \item $|T| < \frac{1}{2}$
    \item If $\pi \cap \mcv_1$ is non-trivial, then $T = 0$
    \item If $\pi \cap \mcv_s$ is non-trivial, then $T=0$
\end{itemize}
\end{lem}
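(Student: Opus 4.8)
The plan is to choose, for a given tangent plane $\pi$, an orthonormal basis adapted to the decomposition $\mathfrak{s} = \R A \oplus \mathfrak{n}$, and then to contract $T$ down to a short expression in Lie brackets on which both the Gromov-value bound and the layer grading act transparently.

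\textbf{Choice of basis.} Since $\mathfrak{n}$ has codimension one in $\mathfrak{s}$, either $\pi \subseteq \mathfrak{n}$ or $\pi \cap \mathfrak{n}$ is one dimensional. In the first case I take $\alpha = 0$, $\beta = 1$ and let $\{U,V\}$ be any orthonormal basis of $\pi$. In the second case I let $V$ be a unit vector spanning $\pi \cap \mathfrak{n}$ and complete it to an orthonormal basis of $\pi$ by a vector $\alpha A + \beta U$ with $U \in \mathfrak{n}$ of unit length orthogonal to $V$ and $\alpha^2 + \beta^2 = 1$; this is automatically orthonormal to $V$ because $|A|_h = 1$ and $A \perp \mathfrak{n}$ (when $\beta = 0$ one may take $U$ to be any unit vector of $\mathfrak{n}$ orthogonal to $V$). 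When $\pi$ meets $\mcv_1$ (resp.\ $\mcv_s$) I additionally arrange $V \in \mcv_1$ (resp.\ $V \in \mcv_s$): if $\pi \subseteq \mathfrak{n}$ I simply pick $V$ inside $\pi \cap \mcv_1$ (resp.\ $\pi \cap \mcv_s$); if $\pi \not\subseteq \mathfrak{n}$ it is forced, since $\pi \cap \mcv_i \subseteq \pi \cap \mathfrak{n}$ and the latter is a line, hence $\pi \cap \mathfrak{n} = \pi \cap \mcv_i$.

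\textbf{Reduction of $T$.} Write $D := \ad A|_{\mathfrak{n}}$. By Corollary~\ref{cor:symtrans} this map is symmetric; in a layered basis it is diagonal with eigenvalue $i$ on $\mcv_i$, and it is a derivation of $\mathfrak{n}$. Feeding Milnor's formula $\nabla_X^N Y = \tfrac12\bigl([X,Y] - \ad_X^{t} Y - \ad_Y^{t} X\bigr)$ for the left-invariant Levi-Civita connection of $N$ into $(\nabla_U^N D)V = \nabla_U^N(DV) - D(\nabla_U^N V)$, and repeatedly using the derivation identity $D[X,Y] = [DX,Y] + [X,DY]$ together with the symmetry of $D$, the families of metric-transpose terms cancel in pairs and I am left with
\[
T(U,V) \;=\; -\,\langle [U,DV],V\rangle \;-\; \tfrac12\langle [DU,V],V\rangle \;-\; \tfrac12\langle U,[DV,V]\rangle .
\]
I expect this algebraic reduction to be the main obstacle: no individual cancellation is deep, but there are several of them and they must be tracked with the correct metric-adjoint conventions.

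\textbf{Estimate and vanishing.} Each term of $T(U,V)$ is, up to the factor $\tfrac12$, of the form $\langle [X,W], V\rangle$ with one argument of the bracket replaced by its image under $D$. Since $D$ is diagonal with spectrum $\{1,\dots,s\}$ we have $\|DW\| \le s\,\|W\|$, and since the $H$--metric restricts on $\mathfrak{n}$ to a metric of Gromov value $\tfrac1{20s}$ while $|U| = |V| = 1$, each of the three brackets has norm at most $\tfrac1{20s}\cdot s = \tfrac1{20}$; hence $|T| \le \tfrac1{20} + \tfrac1{40} + \tfrac1{40} = \tfrac1{10} < \tfrac12$. For the vanishing statements: if $V \in \mcv_1$ then $DV = V$, so $[DV,V] = 0$, while the remaining brackets $[U,V]$ and $[DU,V]$ lie in $\bigoplus_{j\ge 2}\mcv_j$, which is orthogonal to $V \in \mcv_1$, so $T = 0$; if $V \in \mcv_s$ then $V$ is central in $\mathfrak{n}$, so all three brackets vanish and $T = 0$. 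This exhausts the cases.
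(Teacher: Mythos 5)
Your proof is correct. It starts from the same two ingredients as the paper's argument --- the formula $\triangledown_X Y = \tfrac12\bigl([X,Y] - (\ad_X)^T Y - (\ad_Y)^T X\bigr)$ for the left-invariant connection on $N$, together with the Gromov-value bound on brackets --- but it carries the cancellation to completion rather than estimating the covariant derivatives wholesale. I verified your closed form $T = -\langle [U,DV],V\rangle - \tfrac12\langle [DU,V],V\rangle - \tfrac12\langle U,[DV,V]\rangle$ (the key inputs being that $D=\ad A$ is symmetric and a derivation of $\mfn$), and it is right; it yields the sharper bound $|T|\le \tfrac1{10}$ in place of the paper's $\tfrac38$-based estimate, and it turns the two vanishing statements into one-line consequences of the grading. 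The one place where you genuinely diverge from the paper is the $\mcv_s$ case: the paper normalizes so that $U\in\mcv_s$ and argues that $\ad U=(\ad U)^T=0$ kills every term, but that choice leaves the summand $-\tfrac12\langle U,[DV,V]\rangle$ alive, and $[DV,V]$ can have a nonzero $\mcv_s$-component when $V$ mixes layers (for instance $V=V_1+V_2$ in a step-$3$ algebra gives $[DV,V]=-[V_1,V_2]\in\mcv_3$). Your normalization $V\in\mcv_s$ --- which, as you observe, is forced whenever $\pi\not\subseteq\mfn$ because $\pi\cap\mfn$ is then the line $\pi\cap\mcv_s$ --- makes $V$ central, so all three brackets vanish. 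So your argument is not only valid but quietly repairs the third bullet of the paper's own proof; the minor ambiguity when $\pi\subseteq\mfn$ meets both $\mcv_1$ and $\mcv_s$ is harmless since either choice of $V$ gives $T=0$.
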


\begin{proof}
We break down the definition of $T = \langle (\triangledown_U^N \ad A) V -(\triangledown_V^N \ad A) U, V \rangle$. Recall that for vectors regarded as left-invariant vector fields, we have the following identity (see (7) of \cite{azenwilson}): $\triangledown_X Y = \frac{1}{2} ( [X,Y]-(\ad_X)^T Y - (\ad_Y)^TX)$. In particular it is expressible as a linear combination of terms involving the adjoint action realized by the Lie bracket and its transpose. From this we see that a bound on the size of the Lie bracket given a bound on the inputs allows to bound the values of $\triangledown_U \ad A, \triangledown_V \ad A$ and thus $T$. \\
As a left-invariant vector field, we can realize $\ad A$ as the sum of coordinate vector fields using the diagonal expression from the layered basis: $\ad A = \sum_j \psi(j) E_j$ where $\psi(j) = i \iff E_j \in \mcv_i$. Then by the Gromov condition that $||[X,Y]|| \leq \frac{1}{20s}||X|| ||Y||$ and the facts that $\psi(j)\leq s$ and $||U|| \leq ||V|| = 1$, we know that $ ||(\triangledown_U^N \ad A) V -(\triangledown_V^N \ad A) U|| \leq \frac{3}{8} $, which means $|T| < \frac{1}{2}$, as we are projecting the previous vector onto the $V$ direction.

Now if $\pi \cap \mcv_1$ is nontrivial, we can arrange it so that $V \in \mcv_1$. Notice from the definition of $T$ that $(\triangledown_U^N \ad A) V -(\triangledown_V^N \ad A) U$ lives entirely in $\oplus_{i \geq 2} \mcv_i$, which is orthogonal to $\mcv_1$. As such $\langle (\triangledown_U^N \ad A) V -(\triangledown_V^N \ad A) U,V \rangle =0$.

Finally, if  $\pi \cap \mcv_s$ is nontrivial, we can use similar reasoning and assume that $U \in \mcv_s$. Now see that $\ad U$ is identically trivial, as $[U,V_j] \subset \mcv_{j+s} =0$. Similarily we know that $(\ad U)^T$ is also trivial, which proves the claim. \qedhere

\end{proof}

\begin{proof}[Proof of Theorem~\ref{thm:pinchedcurv}]
Let $G$ be a Heintze group of Carnot-type. Begin by equipping $G$ with a layered metric with the property that the left-invariant metric on the derived subgroup has a Gromov value of $\frac{1}{20s}$. By the definition of a layered metric, we already know the sectional curvature of planes determined by layered basis vectors, so we need to consider planes which may be generated by linear combinations of these. We break this argument into two cases for tangent planes $\pi$.

Case 1) $\pi \subset \mathfrak{n} = \R A  ^\perp$

Let $U,V$ be orthonormal vectors spanning $\pi$. Then we recall the formula from \cite{QP} used in the proof of Theorem~\ref{thm:main}.

\begin{align*}
 K(\pi) &= K^\mathfrak{n}(U,V) - |\phi(U,V)|^2 \\
 &= K^\mathfrak{n}(U,V) - |\sqrt{D_0}U \wedge \sqrt{D_0}V|^2  
\end{align*}

Recall that $\sqrt{D_0}V_i = \sqrt{i}V_i$ when $V_i \in \mcv_i$. Now because $|K^\mathfrak{n}| < \frac{1}{4s^2}$ the result follows from linearity of $D_0$ and the fact that $K^N$ is strictly negative when constrained to $\mcv_1$ and identically zero on $\mcv_s$.

Case 2) $\pi$ is not orthogonal to $A$

Assume $\pi = \langle \alpha A + \beta U, V \rangle, U,V \in \mathfrak{n}$ for vectors of the form specified in Lemma~\ref{lem:pi} - recall they are thus assumed to be orthonormal. In this case we use part c) of 1.5 in Eberlein and Heber \cite{QP} to find, recalling that $D_0 = \ad A$:

\begin{align*}
K(\pi) &= -\alpha^2 \langle N_0 V,V \rangle + \beta^2 K(U,V) +2 \alpha \beta \langle (\triangledown_U^N \ad A) V -(\triangledown_V^N \ad A) U, V \rangle  \\
&= -\alpha^2 \langle N_0 V,V \rangle + \beta^2 K(U,V) +2 \alpha \beta T(U,V)
\end{align*} 

In that statement, $N_0$ is defined to be the transformation defined in Heintze \cite{Heintze74_Homogeneous}; i.e. $N_0 = D_0^2 + [D_0,S_0]$. Thus, in our case, $N_0 = D_0^2$.

In the event that $\pi$ intersects either $\mcv_1$ or $\mcv_s$, the above simplifies by Lemma~\ref{lem:pi} to the following.

\[ K(\pi) = -\alpha^2 k^2 + \beta^2 K(U,V) \]

for $1 \leq k \leq s$. Because $K(U,V) \in [-s^2,-1]$, per case 1, and $\alpha^2 + \beta^2 =1$, this completes this case. In the event that $\pi$ does not intersect either the horizontal or the top layer nontrivially, our bounds for the above terms are instead $2 \leq k \leq s-1$ and $K(U,V) \in [-(s-1)^2,-4]$. Now we see that because Lemma~\ref{lem:pi} tells us that $|T|< \frac{1}{2}$, and so $|2 \alpha \beta T| < \frac{1}{2}$. Therefore $K \in [-(s-1)^2-0.5,-3.5] \subset [-s^2,-1]$.

To show this pinching is optimal, we will use the assumption that $[G,G]$ admits a lattice (this was not assumed in the ``forward'' direction). Begin by equipping $G$ with a metric, we wish to show it is not more tightly pinched that $\frac{1}{s^2}$. Immediately we see that if this metric does not have strictly negative curvature bounded away from zero, the result is immediate, as it will not be pinched for any value. Therefore we may normalize the metric to assume its largest curvature value is $-1$.

The derived subgroup is nilpotent; call this subgroup $N$. We know by Theorem~2.1 of \cite{Raghunathan} that a lattice in a nilpotent Lie group is always cocompact. Let $\Gamma$ represent the lattice assumed to exist in $N$. We can pass to the quotient manifold $G_\Gamma := G/\Gamma$. By construction $\pi_1(G_\Gamma) \cong \Gamma$. Furthermore, by cocompactness and Chapter~2 of \cite{Raghunathan}, we may conclude $\Gamma$ is also s-step nilpotent.

Observe that because the action $\Gamma \curvearrowright G$ is by isometries, the sectional curvatures present in $G$ is the same set of curvatures present in $G_\Gamma$; indeed just apply the locally isometric covering map. We may now apply the main result of Belegradek and Kapovitch from \cite{BeleKap}, which states that a manifold whose fundamental group has as a finite index subgroup a nilpotent group of step $s$ must have sectional curvatures, after normalization such that the supremum of its sectional curvature is $-1$, at least as negative as $-s^2$.
\end{proof}

In the above way we conclude that a layered metric observes the tightest possible constraints on sectional curvature when the Heintze group of Carnot-type admits a lattice of its Carnot group. 




\section{Further Discussion}
\label{sec:further}

Of primary interest when considering Heintze groups of Carnot-type has historically been solvable group models for symmetric spaces, or more generally Heintze groups whose associated nilpotent group is step 2. However, it is worthwhile to note examples of Heintze groups of Carnot-type which are of arbitrary step.

\begin{defn}{\cite{harmonic}}
Let $k \in \N$ and consider $SL(k+1,\R)$ the special linear group of degree $k+1$ with Lie algebra $\mathfrak{sl}(k+1,\R)$, where the bracket operation is matrix multiplication commutation. If $E_{ij}$ is the matrix with a $1$ in the $(i,j)$ place and zeroes elsewhere, let $$\mathfrak{n}= \Span \bigl( \{E_{ij} \ | \ j > i\} \bigr)$$ and let $A$ be the following matrix 

$$ A =	\frac{1}{2} \begin{pmatrix}
k & & & & \\
& k-2 & & & \\ 
& & \cdots & & \\ 
& & & -(k-2) & \\
& & & & -k
\end{pmatrix}	
$$

The Lie subalgebra of $\mathfrak{sl}(n+1,\R)$ formed by  $\mathfrak{n} \oplus \R A$ is denoted $\mathfrak{sc}(k)$. The \emph{special Heintze group of Carnot-type} is the corresponding Lie group is $SC(k)$.
\end{defn}

The condition that $j$ is strictly larger than $i$ guarantees all of the matrices $E_{ij}$ have trace zero and thus live in $\mathfrak{sl}(k+1)$.

The following is an easy application of the construction in Section~\ref{sec:layered}.

\begin{prop}
\label{thm:spech}
Consider the inner product $\langle \cdot   \rangle_H$ on $\mathfrak{sc}(k+1)$ such that:
\begin{itemize}
    \item $|E_{ij}|_H=\frac{1}{100k(j-i)}$ for all $i,j$
    \item $|A|_H=1$ 
    \item $\langle E_{ij},E_{k \ell} \rangle_H = \langle E_{ij},E_{k \ell} \rangle_{\mathfrak{sl}} = 0 $
    \item $\langle E_{ij},A \rangle_H = \langle E_{ij},A \rangle_{\mathfrak{sl}} = 0 $
\end{itemize}

Then $\langle \cdot  \rangle_H$ induces a layered metric on $SC(k)$.
\end{prop}

We list here a few questions motivated by the results of this article.

\begin{quest}
Is it true that the $\frac{1}{s^2}$--pinching constant is optimal for all Heintze groups of Carnot-type, regardless of the existence of lattices of the derived subgroup?
\end{quest}

The author suspects the answer to the above question is yes.

\begin{quest}
What does the space of layered metrics look like?
\end{quest}

This question arises from the obervation that the construction of layered metrics depends on a choice of Gromov value for the derived subgroup, as well as the normalization of a certain vertical vector.

\begin{quest}
What can be said about the Ricci and scalar curvatures of a layered metric?
\end{quest}

The fact that these Lie algebras are of Iwasawa type should help answer this question; see the formulas present in \cite{iwasawa-original}, or \cite{fili} for a more general context.

\bibliography{refs.bib}
\bibliographystyle{alpha}

\end{document}